\DeclarePairedDelimiter\ceil{\lceil}{\rceil}
\newcommand{\R}{\mathbb{R}}
\newcommand{\x}{{\bf x}}
\newcommand{\p}{{\bf p}}
\newtheorem{Theorem}{Theorem}[section]
\newtheorem{Lemma}{Lemma}[section]
\newtheorem{Corollary}{Corollary}[section]
\newtheorem{Remark}{Remark}[section]
\newtheorem{Assumption}{Assumption}[section]
\newtheorem*{Assumption*}{Assumption}
\newtheorem{Definition}{Definition}[section]
\newtheorem{Problem}{Problem}[section]
\newtheorem*{problem*}{Problem}
\numberwithin{equation}{section}
\title{The Carleman-Newton  method to globally reconstruct a source term for nonlinear parabolic equation}
\author{Anuj Abhishek\thanks{Department of Mathematics and Statistics, University of North Carolina at
  Charlotte, Charlotte, NC, 28223, USA.} \thanks{Corresponding author, email: \texttt{anuj.abhishek@uncc.edu}.}  \and Thuy T. Le\footnotemark[1] \and Loc H. Nguyen\footnotemark[1]
  \and Taufiquar Khan\footnotemark[1]}
\date{}
\begin{document}

\maketitle

\begin{abstract}

We propose to combine the Carleman estimate and the Newton method to solve an inverse source problem for nonlinear parabolic equations from lateral boundary data.
The stability of this inverse source problem is  conditionally logarithmic.
Hence, numerical results due to the conventional least squares optimization might not be reliable.
In order to enhance the stability, we approximate this problem by truncating the high frequency terms of the Fourier series that represents the solution to the governing equation. 
By this, we derive a system of nonlinear elliptic PDEs whose solution consists of Fourier coefficients of the solution to the parabolic governing equation.
We solve this system by the  Carleman-Newton method. 
The Carleman-Newton method is a newly developed algorithm to solve nonlinear PDEs.
The strength of the Carleman-Newton method includes (1) no good initial guess is required and (2) the computational cost is not expensive.
These features are rigorously proved.
Having the solutions to this system in hand, we can directly compute the solution to the proposed inverse problem.
Some numerical examples are displayed.
\end{abstract}

\noindent{\it Key words: numerical methods; Carleman estimate; Carleman-Newton;
 boundary value problems; quasilinear elliptic equations; source term}

\noindent{\it AMS subject classification:
35R30, 35K55
}

\section{Introduction}

\label{sec Intro}

This paper belongs to a series of works  to solve inverse problems for nonlinear partial differential equations \cite{LeNguyen:jiip2022, NguyenNguyenTruong:arxiv2022, NguyenKlibanov:ip2022}. 
In particular, we aim to globally solve an inverse source problem for nonlinear parabolic equations.
By ``globally", we mean that our algorithm does not require  {\it a priori} knowledge of the true solution. 
This feature is a significant strength of our method in comparison to the widely-used methods based on least square optimizations, which are locally convergent.
Let $d \geq 2$ be the spatial dimension.
Let $F: \R^d \times \R  \times \R^d \to \R$ be a function in the class $C^1(\overline \Omega)$ and $T$ be a positive number.
Consider the following initial value problem of nonlinear parabolic equation
\begin{equation}
    \left\{
        \begin{array}{ll}
            u_t = \Delta u + F(\x, u, \nabla u)  &(\x, t) \in \R^d \times (0, T),\\
            u(\x, 0) = p(\x) &\x \in \R^d.
        \end{array}
    \right.
    \label{main_eqn}
\end{equation}
Here, $p$ is a source term.
Let $\Omega$ be an open and bounded domain of $\R^d$. Assume that $\Omega$ has a smooth boundary $\partial \Omega.$ 
Denote by $\Omega_T$ and $\partial \Omega_T$ the set $\Omega \times (0, T)$ and $\partial \Omega \times (0, T)$ respectively.
We are interested in the problem of computing the source term $p$ from the measurement of lateral information of the function $u$. 
More precisely, we solve the following problem.
\begin{Problem}[Inverse source problem]
    Given the lateral data 
    \begin{equation}
        g_0(\x, t) = u(\x, t)
        \quad\mbox{and}
        \quad
        g_1(\x, t) = \partial_{\nu }u(\x, t)
        \label{data}
    \end{equation} 
    for all $(\x, t) \in \partial \Omega_T,$ 
    determine the source term $p(\x)$ for $\x \in \Omega.$
    \label{isp}
\end{Problem}

We only solve Problem \ref{isp} under the condition that \eqref{main_eqn} is uniquely solvable and its solution is bounded in $C^1$. 

More precisely, we impose the following condition on solution to \eqref{main_eqn}.
\begin{Assumption}
    Assume the source function $p$ and the nonlinearity $F$ are such that  
    \begin{equation}
        |u(\x, t)| + |\nabla u(\x, t)| \leq M
        \label{1.2}
    \end{equation}
    for a.e. $(\x, t) \in \Omega_T$ and for some number $M$ depending only on $p$ and $F.$
    \label{Assump}
\end{Assumption}

In general, due to the presence of the nonlinearity $F$, the well-posedness and regularity results for \eqref{main_eqn} are not guaranteed. In other words, Assumption \ref{Assump} might not hold true.
Some special conditions on $F$ and $p$ should be imposed.
For completeness, we provide here a set of conditions that guarantees that \eqref{main_eqn} has a unique and bounded solution.
Assume that  $p(\x)$ is in $H^{2+\alpha}(\R^d)$ for some $\alpha \in [0, 1 + 4/d]$. Assume further  for all 
$\x \in \R^d$, $t \in [0, T]$, $s \in \R$, and $\xi \in \R^d$,
\begin{equation}
	|F(\x,  s, \xi)| \leq C\max\big\{
		(1 + |\xi|)^2, 1 + |s|
	\big\}
	\label{Lady}
\end{equation}
for some positive constant $C$.
Then, due to Theorem 6.1 in \cite[Chapter 5, \S 6]{LadyZhenskaya:ams1968} and Theorem 2.1 in \cite[Chapter 5, \S 2]{LadyZhenskaya:ams1968}, 
problem \eqref{main_eqn} has a unique solution with a bounded $C^1$ norm. 

\begin{Definition}
    Fix a nonlinearity $F$. Denote by  $\mathcal{P}$ the set of all source functions $p \in H^1(\Omega)$ such that Assumption \ref{Assump} holds true.
    \label{defP}
\end{Definition}

In practice, the solution $p(\x),$ $\x \in \Omega,$ to
Problem \ref{isp} might represent the initial distribution of the temperature or the initial stage of the pollutant. 
Therefore, computing $p$ is important in 
 many real-world applications, e.g.,	 
determination of the spatially distributed temperature inside a solid from the boundary measurement of the heat and heat flux in the time domain \cite{Klibanov:ip2006}; 
determining the level of pollutant on the surface of the rivers or lakes \cite{BadiaDuong:jiip2002};
 effective monitoring the heat conduction processes in steel industries, glass and polymer-forming and nuclear power station \cite{LiYamamotoZou:cpaa2009}.
In the special case when the nonlinear term $F$ takes the form $u(1 - u)$ (or $q(u) = u(1 - |u|^{\alpha})$) for some $\alpha > 0$, the parabolic equation in \eqref{main_eqn} is called the high dimensional version of the well-known Fisher (or Fisher-Kolmogorov) equation \cite{Fisher:ae1937}.
Although the nonlinearity $q$ does not satisfy condition \eqref{Lady}, we do not experience any difficulty in numerical computations of the forward problem.
It is worth mentioning that the Fisher equation occurs in ecology, physiology, combustion, crystallization, plasma physics, and in general phase transition problems, see \cite{Fisher:ae1937}.
Due to its realistic applications, the problem of determining the initial conditions of parabolic equations has been studied  intensively. 
The uniqueness of Problem \ref{isp} for linear model was proved in  \cite{Lavrentiev:AMS1986}.
The the logarithmic stability results were rigorously proved in \cite{Klibanov:ip2006, LiYamamotoZou:cpaa2009}.
Due to the presence of the nonlinear term $F(\x, u, \nabla u)$ in \eqref{main_eqn}, the uniqueness and stability of Problem \ref{isp} might need to be proved, 
especially when the nonlinearity $F$ might growth faster than the nonlinearity in \cite{Klibanov:ip2006, LiYamamotoZou:cpaa2009}.
This task can be done by combining Theorem 1 in \cite{Klibanov:ip2006} and a truncation technique. 
Since the proof is not complicated, we present this result here in this paper for the convenience of the reader.

Regarding constructive method, the widely used methods to solve inverse source problem like Problem \ref{isp} are based on optimization.
In such methods, one typically optimizes a cost functional based on the problem.
A prototypical example of a cost functional is
\[
    J(p) = \int_0^T\int_{\partial \Omega}\big[|L(p)(\x, t) - g_0(\x, t)|^2 + |\partial_{\nu}L(p)(\x, t) - g_1(\x, t)|^2
    \big] d\sigma(\x) dt
    + \mbox{a regularization term}
\]
where $L(p) = u$ is the solution to \eqref{main_eqn}.
The minimizer of the cost functional serves as the computed solution to the inverse problem.
Using this technique is challenging because inverse source problem for parabolic equation is severely ill-posed. 
The known stability is conditionally logarithmic (see  \cite[Theorem 1]{Klibanov:ip2006} and \cite{LiYamamotoZou:cpaa2009}), which is weaker than logarithmic. 
As a result, the reliability of numerical solutions due to these approaches might not be guaranteed, especially,  in the case when significant noise is involved in the measured data.
Another challenge in solving Problem \ref{isp} is the presence of the nonlinear term $F(\x, u, \nabla u),$ making the cost functional nonconvex.
The cost functional might have multiple local minima and ravines. 
Hence, in general, to solve Problem \ref{isp}, a good initial guess of the true solution is necessary.
On the other hand, the expensive computational cost is another drawback of the optimization-based methods.
Our new numerical approach proposed in this paper relaxes all drawbacks above. 
It is applicable to the data containing up to $20\%$ of (multiplicative) noise.
 We name it Carleman-Newton method because it is suggested by  Carleman estimates and the well-know Newton method in solving nonlinear equations.

We draw the reader's attention to the Carleman contraction principle, see \cite{BAUDOUIN:SIAMNumAna:2017, Boulakia:esaim2021, LeNguyen:jiip2022, Le:preprintCONN2022} to solve related problems to Problem \ref{isp}.
Like the proposed Carleman-Newton method, the Carleman contraction principle is very powerful since it can quickly solve nonlinear inverse problems without requesting a good initial guess. 
The new point of our paper in comparison to the cited publications above is that we allow $F$ to depend on $\nabla u$ and we study the case when data contains noise. 
The case when $F$ depends on $\nabla u$ arises from the field of Hamilton-Jacobi equations of the form $u_t = \epsilon \Delta u + F(\x, u, \nabla u)$, $0 < \epsilon \ll 1$ where the term $\epsilon \Delta u$ serves as the viscosity term. Hamilton-Jacobi equations are important in many fields; e.g.  game theory and light propagation. 
Therefore, this gradient dependent case is worth studying.
Rather than using the Carleman contraction principle as in \cite{BAUDOUIN:SIAMNumAna:2017, Boulakia:esaim2021, LeNguyen:jiip2022, Le:preprintCONN2022}, we develop the Carleman-Newton method to solve the inverse problem under consideration. 
The Carleman-Newton method in this paper is stronger than the one introduced in \cite{LeNguyenTran:preprint2021}.
In fact, we can prove a Lipschitz stability result of our proposed method with respect to the $H^2$ norm while the convergence in the cited papers above, as well as \cite{LeNguyenTran:preprint2021}, is with respect to the $H^1$ norm.
Another new point of this paper is that in the current paper, we study the noise analysis, which was missing in \cite{LeNguyenTran:preprint2021}.

As mentioned, the best stability result for Problem \ref{isp} is conditionally logarithmic.
Therefore, solving it is extremely challenging. 
To overcome this difficulty, we propose to solve Problem \ref{isp} in the Fourier domain truncating all high frequency components. 
More precisely, we derive a system of elliptic PDEs whose solution consists of a finite number of the Fourier coefficients of the solution to the parabolic equation (\ref{main_eqn}). 
The solution of this system directly yields the knowledge of the function $u(\x, t)$, from which the solution to our inverse problem follows. 
We numerically solve this nonlinear system by the Carleman-Newton method suggested in \cite{LeNguyenTran:preprint2021}. 
The initial solution can be computed by solving the system obtained by removing the nonlinear term.
Then, we approximate the nonlinear system by its linearization. 
Solving this approximation system, we find an updated solution.
Continuing this process, we get a fast convergent sequence reaching to the desired function.
The convergence of this iterative procedure is rigorously proved by using a new Carleman estimate.
The fast convergence will be shown in both analytic and numerical senses.

Some papers closely related to the current one are \cite{Boulakia:esaim2021, LeNguyen:jiip2022, Le:preprintCONN2022, LiNguyen:IPSE2020}.
On the other hand, the coefficient inverse problem for parabolic equations is also very interesting and studied intensively. 
We draw the reader's attention to \cite{Borceaetal:ip2014, CaoLesnic:nmpde2018, CaoLesnic:amm2019, KeungZou:ip1998, Nguyen:CAMWA2020, Nguyens:jiip2020, YangYuDeng:amm2008} for important numerical methods and good numerical results.
Besides, the  problem of recovering the initial conditions for the hyperbolic equation is very interesting since it arises in many real-world applications. 
For instance, the problems thermo- and photo-acoustic tomography play the key roles in biomedical imaging. 
We refer the reader to some important works in this field \cite{HaltmeierNguyen:SIAMJIS2017,
KatsnelsonNguyen:aml2018,
LiuUhlmann:ip2015}. 
Applying the Fourier transform, one can reduce the problem of reconstructing the initial conditions for hyperbolic equations to some inverse source problems for the Helmholtz equation, see \cite{
LiLiuSun:IPI2018,
NguyenLiKlibanov:2019, 
WangGuoLiLiu:ip2017,
WangGuoZhangLiu:ip2017, ZhangGuoLiu:ip2018} for some recent results.

The paper is organized as follows. 
In Section \ref{sec2}, we prove the uniqueness and the conditionally logarithmic stability results for Problem \ref{isp}.
In Section \ref{sec_method}, we introduce an approximation context of Problem \ref{isp}.
In Section \ref{sec_CarNew}, we recall the Carleman-Newton method in \cite{LeNguyenTran:preprint2021}.
In Section \ref{sec_convergence}, we prove our main theorem about the convergence of the Carleman-Newton method.
In Section \ref{NS}, we present some numerical examples.
Section \ref{sec7} is for concluding remarks.

\section{The uniqueness of the inverse source problem}\label{sec2}

In this section, we study the unique determination of the source function $p$ from the given data. 
We have the following theorem.

\begin{Theorem}
    Let $p_1$ and $p_2$ be in the set $\mathcal{P}$.
    Let $u_1$ and $u_2$ be solutions to \eqref{main_eqn} with $p$ being replaced by $p_1$ and $p_2$ respectively.
    Let $M_1$ and $M_2$ be the numbers in the right hand side of \eqref{1.2} that correspond to $p_1$ and $p_2$ respectively.  
    Let $B = \max\{M_1, M_2\}.$
    Then, there exists a constant $C$ such that for all $\beta \in (0, 2)$, we can find a number $\epsilon_0 > 0$ such that 
    \begin{equation}
        \|p_1 - p_2\|_{L^2(\Omega)} \leq \frac{C}{\beta \ln[\frac{B}{\epsilon_0 {\bf e}}]} \|\nabla (p_1 - p_2)\|_{L^2(\Omega)} + C \Big(\frac{B}{\epsilon_0}\Big)^\beta {\bf e}^{2 - \beta}
        \label{log}
    \end{equation}
    where
    \begin{equation}
        {\bf e} = \|u_1 - u_2\|_{H^1(\partial \Omega_T)} + \|\partial_{\nu}(u_1 - u_2)\|_{L^2(\partial \Omega_T)}.
        \label{bfe}
    \end{equation}
    represents the difference of two data corresponding to two source functions $p_1$ and $p_2$.
    \label{thm_uni}
\end{Theorem}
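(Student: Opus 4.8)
The plan is to reduce Problem~\ref{isp} for the nonlinear equation to a lateral Cauchy problem for a \emph{linear} parabolic equation with bounded coefficients, to invoke the logarithmic stability estimate of \cite[Theorem~1]{Klibanov:ip2006} for that linear problem, and finally to bookkeep the dependence of the constants on $B$ and on the data ${\bf e}$ so as to arrive at the precise inequality \eqref{log}. First I would set $w = u_1 - u_2$ and $v = p_1 - p_2$. Since $|u_i(\x,t)| + |\nabla u_i(\x,t)| \le M_i \le B$ for a.e. $(\x,t) \in \Omega_T$, only the restriction of $F$ to the compact set $\{(\x,s,\xi) : \x \in \overline{\Omega},\ |s|+|\xi| \le B\}$ is relevant; hence one may replace $F$ by a globally Lipschitz $C^1$ function that agrees with it there, without changing $u_1$ or $u_2$. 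Writing the difference of the nonlinear terms through the fundamental theorem of calculus,
\[
    F(\x,u_1,\nabla u_1) - F(\x,u_2,\nabla u_2) = a(\x,t)\,w + {\bf b}(\x,t)\cdot\nabla w,
\]
with $a$ and ${\bf b}$ the corresponding averaged partial derivatives of $F$, one gets $\|a\|_{L^\infty(\Omega_T)} + \|{\bf b}\|_{L^\infty(\Omega_T)} \le C$, where $C$ depends only on $F$, $\Omega$, $T$ and $B$. Therefore $w$ solves
\[
    w_t = \Delta w + a\,w + {\bf b}\cdot\nabla w \ \text{ in } \Omega_T, \qquad w(\cdot,0) = v \ \text{ in } \Omega,
\]
with lateral data $w|_{\partial\Omega_T} = g_0^{(1)} - g_0^{(2)}$ and $\partial_\nu w|_{\partial\Omega_T} = g_1^{(1)} - g_1^{(2)}$, whose $H^1(\partial\Omega_T)\times L^2(\partial\Omega_T)$ norm is ${\bf e}$, and with $\|w\|_{L^2(\Omega_T)} + \|\nabla w\|_{L^2(\Omega_T)} \le C$ thanks to Assumption~\ref{Assump}.

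Next I would feed this linear problem into \cite[Theorem~1]{Klibanov:ip2006}. The mechanism there is: a Carleman estimate for the parabolic operator yields a H\"older-type control of $w$ on an interior time slice $\{t = t_0\}$ by ${\bf e}$ and by the global bound on $w$; then, decomposing $v$ in the eigenbasis of $-\Delta$ on $\Omega$, the part of $v$ supported on eigenvalues $\le \Lambda$ is estimated by $\|w(\cdot,t_0)\|_{L^2(\Omega)}$ up to a factor $e^{c\Lambda}$ (the backward-heat step), while the complementary high-frequency part is bounded by $\Lambda^{-1}\|\nabla v\|_{L^2(\Omega)}$. Choosing the truncation level $\Lambda$ proportional to $\ln(B/{\bf e})$, with the proportionality constant tuned by the free exponent $\beta\in(0,2)$, and collecting the resulting powers of $B$ and ${\bf e}$, produces an estimate of exactly the form \eqref{log}, valid once ${\bf e}$ lies below a threshold $\epsilon_0 = \epsilon_0(\beta) > 0$ chosen so that the logarithm is positive and the Carleman parameters are admissible. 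The uniqueness statement for Problem~\ref{isp} is then read off by letting ${\bf e} \to 0^+$ while keeping $\|\nabla v\|_{L^2(\Omega)}$ fixed.

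The step I expect to be the main obstacle is the constant bookkeeping: I must check that every constant coming out of the Carleman estimate and of Klibanov's spectral splitting depends on the nonlinearity and on $u_1,u_2$ \emph{only} through the bound $B$ --- which is where the reduction above, turning everything into the $L^\infty$ bounds on $a,{\bf b}$ and the $L^2$ bounds on $w$ and $\nabla w$, pays off --- and then that the particular choice $\Lambda = \Lambda(\beta,{\bf e})$ reassembles these constants into the two explicit factors $C/(\beta\ln[B/(\epsilon_0{\bf e})])$ and $C(B/\epsilon_0)^{\beta}{\bf e}^{2-\beta}$. I would also stress that the term $\|\nabla(p_1-p_2)\|_{L^2(\Omega)}$ on the right-hand side is genuinely unavoidable: it measures the high-frequency content of $p_1-p_2$, which the lateral parabolic data does not determine stably, and this is precisely why \eqref{log} is only \emph{conditionally} logarithmic --- an a priori bound on $\|\nabla(p_1-p_2)\|_{L^2(\Omega)}$, e.g. when $p_1,p_2$ stay in a bounded set of $H^1(\Omega)$, is what converts \eqref{log} into a bona fide logarithmic stability estimate.
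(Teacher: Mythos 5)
Your proposal follows essentially the same route as the paper: truncate $F$ outside the set $|s|+|\xi|\le B$ using the a priori bound from Assumption \ref{Assump} (so both $u_1,u_2$ still solve the cut-off equation), deduce that $h=u_1-u_2$ satisfies a parabolic problem with bounded lower-order terms, and apply \cite[Theorem 1]{Klibanov:ip2006} to $h$. The only cosmetic difference is that you linearize via the fundamental theorem of calculus to get an exact equation $h_t=\Delta h+ah+{\bf b}\cdot\nabla h$, while the paper only records the resulting differential inequality $|h_t-\Delta h|\le C_{F,B}(|h|+|\nabla h|)$, which is the form Klibanov's theorem takes as input; these are interchangeable here.
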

\begin{Corollary}[The uniqueness of Problem \ref{isp}]
    It follows from \eqref{log} that by letting ${\bf e}$ tend to $0$, we obtain $p_1 = p_2,$ which is basically the uniqueness of Problem \ref{isp}. 
    \label{col_uni}
\end{Corollary}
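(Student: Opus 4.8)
The plan is to deduce uniqueness directly from the stability estimate \eqref{log}, which has already been established in Theorem \ref{thm_uni}; no new machinery is needed, only a careful passage to the limit as the data discrepancy ${\bf e}$ vanishes. The uniqueness hypothesis of Problem \ref{isp} is precisely that the two source terms $p_1, p_2 \in \mathcal{P}$ generate identical lateral data $g_0$ and $g_1$ on $\partial \Omega_T$; by the definition \eqref{bfe} this means ${\bf e} = 0$. Since $p_1, p_2 \in \mathcal{P} \subset H^1(\Omega)$, both $\|p_1 - p_2\|_{L^2(\Omega)}$ and $\|\nabla(p_1 - p_2)\|_{L^2(\Omega)}$ are fixed finite numbers determined by the pair $(p_1, p_2)$.

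First I would fix an arbitrary $\beta \in (0, 2)$ and invoke Theorem \ref{thm_uni} to obtain the constant $C$ and the number $\epsilon_0 > 0$ for which \eqref{log} holds. I would then analyze the two terms on the right-hand side of \eqref{log} as ${\bf e} \to 0^+$. For the first term, observe that $\ln\big[\tfrac{B}{\epsilon_0 {\bf e}}\big] = \ln\tfrac{B}{\epsilon_0} + \ln\tfrac{1}{{\bf e}} \to +\infty$, so the coefficient $\frac{C}{\beta \ln[B/(\epsilon_0 {\bf e})]}$ tends to $0$; multiplied by the fixed finite quantity $\|\nabla(p_1 - p_2)\|_{L^2(\Omega)}$, this term tends to $0$. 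For the second term, the hypothesis $\beta < 2$ guarantees the exponent $2 - \beta > 0$, whence ${\bf e}^{2-\beta} \to 0$, and since $C(B/\epsilon_0)^\beta$ is fixed, this term also tends to $0$.

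Combining the two limits, the right-hand side of \eqref{log} tends to $0$ as ${\bf e} \to 0^+$, while the left-hand side $\|p_1 - p_2\|_{L^2(\Omega)}$ is a fixed nonnegative number. Hence $\|p_1 - p_2\|_{L^2(\Omega)} \leq 0$, forcing $\|p_1 - p_2\|_{L^2(\Omega)} = 0$ and therefore $p_1 = p_2$ almost everywhere in $\Omega$.

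The only point requiring care — and the closest thing to an obstacle in an otherwise immediate argument — is the degenerate behavior of the logarithm in the first term when ${\bf e} = 0$, where $\ln[B/(\epsilon_0 {\bf e})]$ is not literally defined; this is handled cleanly by the limiting interpretation above rather than by direct substitution. It is also worth emphasizing that the admissibility of the whole range $\beta \in (0, 2)$ is exactly what makes the exponent $2 - \beta$ strictly positive, so that the second term vanishes in the limit; any single such $\beta$ already suffices for the conclusion.
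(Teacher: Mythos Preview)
Your argument is correct and is exactly the limiting argument the paper has in mind; the paper does not spell out a separate proof for this corollary but simply asserts that letting ${\bf e}\to 0$ in \eqref{log} forces $p_1=p_2$, which is precisely what you carry out in detail. Your careful handling of the two right-hand terms (using $\beta<2$ for the second and the blow-up of the logarithm for the first) is the intended reasoning.
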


\begin{proof}[Proof of Theorem \ref{thm_uni}]
Let $\chi_B: \overline \Omega \times \R \times \R^d \to [0, 1]$ be a  cut off function in the class $C^{\infty}$ that satisfies
\begin{equation}
    \chi_B(\x, s, \p) =
    \left\{
        \begin{array}{ll}
            1 & 0 \leq |s| + |\p| \leq B\\
            0 & |s| + |\p| > 2B.
        \end{array}
    \right.
    \label{chi_B}
\end{equation}
Define 
\begin{equation}
    F_B(\x, s, \p) = \chi_B(\x, s, \p) F(\x, s, \p),
    \quad
    \mbox{for all }
    (\x, s, \p) \in \overline \Omega \times \R \times \R^d.
    \label{2,3}
\end{equation} 
Since Assumption \ref{Assump} holds true for $p_1$ and $p_2$,
we have
\[
    |u_i(\x, t)| + |\nabla u_i(\x, t)| \leq 
    M_i \leq B
\]
for $(\x, t) \in \overline{\Omega_T}$, $i \in \{1, 2\}.$
Hence, both $u_1$ and $u_2$ satisfy the ``cut off" parabolic equation
\begin{equation}
    u_t = \Delta u + F_B(\x, u, \nabla u)
    \quad
    \mbox{for all } (\x, t) \in \overline{\Omega_T}.
    \label{2.1}
\end{equation}
    Since $F_B$ is smooth and has compact support, it is Lipschitz.
There exists a constant $C_{F, B}$ depending only on $F$ and $B$ such that
\begin{equation}
    |F_B(\x, s_1, \p_1) - F_B(\x, s_2, \p_2)| \leq C_{F, B} (|s_1 - s_2| + |\p_1 - \p_2|)
    \label{Lip}
\end{equation}
for all $\x \in \overline \Omega$, $s_1, s_2 \in \R$ and $\p_1, \p_2 \in \R^d.$
Define $h = u_1 - u_2.$
Due to \eqref{2.1} and \eqref{Lip},
\begin{align}
    |h_t(\x, t) - \Delta h(\x, t)| 
    &= |F_B(\x, u_1(\x, t), \nabla u_1(\x, t)) - |F_B(\x, u_2(\x, t), \nabla u_2(\x, t))|
    \nonumber
    \\
    &\leq C_{F, B} (|h(\x, t)| + |\nabla h(\x, t)| \label{2.3}
\end{align}
for all $(\x, t) \in \overline{\Omega_T}.$
It is obvious that
\[
    {\bf e} = \|h\|_{H^1(\partial \Omega_T)} + \|\partial_{\nu} h\|_{L^2(\partial \Omega_T)}.
\]
Fix $\beta \in (0, 2)$.
Using \eqref{2.3} and applying Theorem 1 in \cite{Klibanov:ip2006} for the function $h$, we can find a constant $C > 0$ and  a number $\epsilon_0 \in (0, 1)$ such that
\begin{equation}
    \|h(\x, 0)\|_{L^2(\Omega)} \leq
    \frac{C}{\beta \ln[\frac{B}{\epsilon_0 {\bf e}}]} \|\nabla h(\x, 0)\|_{L^2(\Omega)} + C \Big(\frac{B}{\epsilon_0}\Big)^\beta {\bf e}^{2 - \beta}.
    \label{2.6}
\end{equation}
Estimate \eqref{log} is a direct consequence of \eqref{2.6}.
\end{proof}

\begin{Remark}
    Although estimate \eqref{log} guarantees the uniqueness of Problem \ref{isp} (see Corollary \ref{col_uni}), it does not lead to a reliable numerical approach to solve Problem \ref{isp}.
    In fact, due to the presence of the term $\|\nabla (p_1 - p_2)\|_{L^2(\Omega)}$ in the right hand side of \eqref{log}, we cannot obtain a  stability result for Problem \ref{isp}.
    Therefore, methods to solve Problem \ref{isp} based on optimization might not provide reliable solutions. 
     \label{rem1} 
\end{Remark}

By Remark \ref{rem1}, rather than employing the optimization approach, to numerically solve Problem \ref{isp}, we propose to regularize and approximate the inverse problem by truncating the high frequency components of the solution to \eqref{main_eqn}. 
This idea was introduced in \cite{Klibanov:jiip2017}. Then, it was successfully used in many projects of our research group; see e.g. \cite{KhoaKlibanovLoc:SIAMImaging2020, LeNguyen:jiip2022,  LeNguyen:JSC2022}. Details will be given in the next section.


\section{A numerical method to solve Problem \ref{isp}} \label{sec_method}

Our method to stably solve Problem \ref{isp} consists of two stages. 
In stage 1, we derive a ``Garlekin-Fourier" approximation model of \eqref{main_eqn}. 
In stage 2, we solve that approximation model by the Carleman-Newton method, first introduced in \cite{LeNguyenTran:preprint2021}.
Stage 1 is presented in this section while we will develop a numerical method for Stage 2 in Sections \ref{sec_CarNew} and \ref{sec_convergence}.


Motivated by \cite{LeNguyen:jiip2022} and  Remark \ref{rem1}, we solve Problem \ref{isp} by establishing a system of quasi-linear partial differential equations with Cauchy boundary data. 
This system will be solved later by the method proposed in  \cite{LeNguyenTran:preprint2021}.
Let $\{\Psi_n\}_{n \geq 1}$ be an orthonormal basis of $L^2(0, T).$
We approximate the solution $u$ to \eqref{main_eqn} by truncating its Fourier expansion with respect to the basis $\{\Psi_n\}_{n \geq 1}$ as follows
\begin{equation}
    u(\x, t) = \sum_{n = 1}^\infty u_n(\x) \Psi_n(t) 
    \simeq \sum_{n = 1}^N u_n(\x) \Psi_n(t)
    \label{3.1}
\end{equation}
for $(\x, t) \in \overline{\Omega_T}$ where
\begin{equation}
    u_n(\x) = \int_0^T u(\x, t) \Psi_n(t)dt
    \label{3.2}
\end{equation}
for some ``cut off" number $N$. 
The cut off number $N$ will be numerically chosen later. 
We also approximate $u_t(\x, t)$, $(\x, t) \in \overline{\Omega_T}$, by
\begin{equation}
    u_t(\x, t) \simeq \sum_{n = 1}^N u_n(\x) \Psi_n'(t).
    \label{3.3}
\end{equation}
From now on, we assume that the approximations in \eqref{3.1} and \eqref{3.3} are valid in the sense that the resulting errors are sufficiently small. 
This assumption is acceptable in computation. 
It somewhat similar to the main principle in Galerkin approximation, in which we approximate the function $u(\x, \cdot)$ by using a finite number of elements in the orthonormal basis $\{\Psi_n\}_{n \geq 1}$ of $L^2(0, T).$ 
Plugging \eqref{3.1} and \eqref{3.3} into \eqref{main_eqn}, we obtain
\begin{equation}
    \sum_{n = 1}^N u_n(\x)\Psi_n'(t) = \sum_{n = 1}^N \Delta  u_n(\x) \Psi_n(t) + F\Big(\x, \sum_{n = 1}^N u_n(\x) \Psi_n(t), \sum_{n = 1}^N \nabla u_n(\x) \Psi_n(t)\Big)
    \label{3.4}
\end{equation}
for all $(\x, t) \in \overline{\Omega_T}.$
For each $m \in \{1, \dots, N\}$, we multiply the function $\Psi_m$ to both sides of \eqref{3.4} and the integrate the resulting equation with respect to $t$. 
We obtain
\begin{multline}
    \sum_{n = 1}^N u_n(\x) \int_0^T \Psi_n'(t)\Psi_m(t) dt 
    = \sum_{n = 1}^N \Delta  u_n(\x) \int_0^T \Psi_n(t)\Psi_m(t)dt
    \\
    + \int_0^T F\Big(\x, \sum_{n = 1}^N u_n(\x) \Psi_n(t), \sum_{n = 1}^N \nabla u_n(\x) \Psi_n(t)\Big)\Psi_m(t) dt
    \label{3.5}
\end{multline}
for all $\x \in \overline \Omega.$
For $m, n \in \{1, \dots, N\}$ and $\x \in \overline \Omega$, define
\begin{align}
    &s_{mn} = \int_0^T \Psi_n'(t)\Psi_m(t) dt, \label{smn}\\ 
    &\mathfrak{f}_m(\x, U, \nabla U) =\int_0^T F\Big(\x, \sum_{n = 1}^N u_n(\x) \Psi_n(t), \sum_{n = 1}^N \nabla u_n(\x) \Psi_n(t)\Big)\Psi_m(t) dt \label{3.7}
\end{align}
where $U = (u_1, u_2, \dots, u_N)^{\rm T}$.
Denote by $S$ and $\mathcal{F}(\x, U, \nabla U)$ the matrix $(s_{mn})_{m, n = 1}^N$ and the vector $(\mathfrak{f}_m(\x, U, \nabla U))_{m = 1}^N$ respectively. 
Since 
\[
    \int_0^T \Psi_n(t)\Psi_m(t) = \left\{
        \begin{array}{ll}
             1 & m = n, \\
             0 & m \not = n, 
        \end{array}
    \right.
\]
it follows from \eqref{3.5} that
\begin{equation}
     \Delta U - SU + \mathcal{F}(\x, U, \nabla U) = 0 
     \quad
     \mbox{for all } \x \in \Omega.
     \label{3.8}
\end{equation}
On the other hand, due to \eqref{data} and \eqref{3.2}, we have for all $m \in \{1, \dots, N\}$ and $\x \in \partial \Omega$,
\begin{align}
    & u_m(\x) = \int_0^T g_0(\x, t) \Psi_m(t)dt, \label{3.9}
    \\
    & \partial u_m(\x) = \int_0^T g_1(\x, t) \Psi_m(t)dt. \label{3.10}
\end{align}
For $\x \in \partial \Omega$, define 
\begin{align}
    &G_0(\x) = \Big(\int_0^T g_0(\x, t) \Psi_m(t)dt\Big)_{m = 1}^N, \label{3.11}
    \\
    &G_1(\x) = \Big(\int_0^T g_1(\x, t) \Psi_m(t)dt\Big)_{m = 1}^N. \label{3.12}
\end{align}
It follows from \eqref{3.8}--\eqref{3.12}, the vector $U$ satisfies the Cauchy like boundary problem
\begin{equation}
    \left\{
    \begin{array}{ll}
         \Delta U - SU + \mathcal{F}(\x, U, \nabla U) = 0 & \x \in \Omega, \\
         U(\x) = G_0(\x)& \x \in \partial \Omega,\\
         \partial_{\nu} U(\x) = G_1(\x) &\x \in \partial \Omega.
    \end{array}
    \right.
    \label{3.13}
\end{equation}

In the next section, we combine a Carleman estimate and the Newton method to compute a solution $U$ to \eqref{3.13}. 
Once this step is done, Problem \ref{isp} is solved.
In fact, having $U$ in hand, we can compute $u(\x, t)$ via \eqref{3.1}. 
The desired function $p(\x)$ is given by $u(\x, 0)$ for all $\x \in \Omega.$

\begin{Remark}
Due to the cut off in \eqref{3.1},  system \eqref{3.13} is not exact. 
We called it an approximation model.
As mentioned in Remark \ref{rem1}, directly solving the inverse problem with the optimization method may be problematic since the stability is just conditionally logarithmic.
In contrast, by a step of approximation, we derive a system of elliptic equations with Cauchy data. It is well-known that solving elliptic equations with full boundary conditions is stable. This promises the success for our numerical study. 
This cut off technique was used to solve several different types of inverse problems; see e.g. \cite{VoKlibanovNguyen:IP2020, Khoaelal:IPSE2021, KhoaKlibanovLoc:SIAMImaging2020, LeNguyen:jiip2022, LeNguyenNguyenPowell:JOSC2021, Nguyen:CAMWA2020, Nguyens:jiip2020, NguyenNguyenTruong:arxiv2022}.
In contrast, proving the convergence of \eqref{3.13} as $N \to \infty$ is extremely challenging. 
Studying the behavior of \eqref{3.13} as $N \to \infty$ is out of the scope of the paper.
\end{Remark}

\section{The Carleman-Newton method}\label{sec_CarNew}

Currently, there are at least three different Carleman based methods to solve \eqref{3.13}:
\begin{enumerate}
    \item {\it The convexification method}.
    The main idea of the convexification method to solve \eqref{3.13} is to minimize the Carleman weighted functional 
    \begin{equation}
        U\mapsto\int_{\Omega} W_\lambda(\x)|\Delta U - SU + \mathcal{F}(\x, U, \nabla U)|^2d\x 
        + \mbox{a regularization term}
        \label{convex}
    \end{equation}
    
    subject to the given boundary conditions for some
    Carleman weight function $W_\lambda(\x)$. 
    With suitable choice of $W_\lambda(\x)$ and the regularization term, one can prove that the functional in \eqref{convex} is strictly convex in any bounded set of $H^s(\Omega)^N$ where $s > \ceil{d/2} + 2.$ The strict convexity implies that the minimizer is unique. Two other important theoretical results for the convexification method are (1) the minimizer can be obtained by using the conventional gradient descent method and the (2) the minimizer is an approximation of the desired solution to \eqref{3.13}. The original idea about the convexification method is introduced in \cite{KlibanovIoussoupova:SMA1995}.
    See \cite{KlibanovNik:ra2017, KhoaKlibanovLoc:SIAMImaging2020, LeNguyen:JSC2022, KlibanovNguyenTran:JCP2022} for follow-up results.
    Although effective in delivering good numerical solutions, the convexification method has a drawback. It is time consuming. 
    We therefore do not employ the convexification method in this paper.

    \item {\it The Carleman contraction method.} 
    The main idea of the contraction method to solve \eqref{3.13} is that we take an arbitrary function $U_0 \in H^2(\Omega)^N$. 
    Assume that $U_n$ is known, we compute $U_{n + 1}$ by solving 
    \begin{equation}
    \left\{
    \begin{array}{ll}
         \Delta U_{n+1} - SU_{n+1} + \mathcal{F}(\x, U_n, \nabla U_{n}) = 0 & \x \in \Omega, \\
         U_{n+1}(\x) = G_0(\x)& \x \in \partial \Omega,\\
         \partial_{\nu} U_{n+1}(\x) = G_1(\x) &\x \in \partial \Omega.
    \end{array}
    \right.
    \label{3.13p}
\end{equation}
    by using the quasi-reversibility method \cite{LattesLions:e1969} combining with a suitable Carleman weight function as in \cite{LeNguyen:jiip2022}.
    One can follow the arguments in \cite{LeNguyen:jiip2022, Nguyen:preprintActa} to prove the convergence of the constructed sequence $\{U_n\}_{n \geq 0}$ to the true solution to \eqref{3.13}. For more details, see the following papers \cite{Le:preprintCONN2022, LeNguyen:jiip2022, Nguyen:preprintActa, NguyenNguyenTruong:arxiv2022}.
    This method was used to solve a similar inverse problem to Problem \ref{isp}. We therefore do not repeat it in this paper.
    \item \label{Newton} The Carleman method combining with linearization \cite{LeNguyenTran:preprint2021}. We name this method Carleman-Newton method. Details of this method will be given in this section.
\end{enumerate}
All three methods above are effective in solving quasi-linear PDEs. In this paper, we use method \ref{Newton}. 
We choose method \ref{Newton} because it is quite powerful in the sense that we have already successfully applied it to compute viscosity solutions to a large class of Hamilton-Jacobi equations in \cite{LeNguyenTran:preprint2021}.

\subsection{A heuristic approach based on the Newton method}\label{sec_heu}

To express the idea behind numerical method \ref{Newton}, we recall here the well-known Newton method to solve nonlinear algebraic equation $f(x) = 0$ for $x \in \R$.
In applying this method, one begins by choosing an initial guess for the solution, say $x_0$. Let $l_0$ be the tangent line to the graph of the function $f$ at $x_0$. One finds the intersection of $l_0$ to the $x-axis$, called $x_1$. 
Let $l_1$ be the tangent line to the graph of the function $f$ at $x_1$. One finds the intersection of $l_1$ to the $x-$axis, called $x_2$. Continuing this procedure, one can obtain a sequence $\{x_n\}_{n \geq 0}$. Under the usual convexity assumptions on $f$, this sequence converges to the solution to the equation $f(x) = 0.$
We combine the idea of the Newton method and a Carleman estimate to solve nonlinear PDEs of the form \eqref{3.13}. 
It is important to mention that this combination is fairly powerful in the sense that the convergence of the constructed sequence to the true solution to \eqref{3.13} is guaranteed regardless of the distance from initial guess to the true solution.

We solve \eqref{3.13} in the strong sense. 
That means we compute a vector valued function $U$ in $H^2(\Omega)^N$ that is the ``best fit" \eqref{3.13}. 
In the analysis, we will use the notation
\begin{equation}
    H = \big\{
        V \in H^2(\Omega)^N: V|_{\partial \Omega} = G_0 \mbox{ and } \partial_{\nu} V|_{\partial \Omega} = G_1
    \big\}.
    \label{setH}
\end{equation}
and
\begin{equation}
    H_0 = \big\{
        V \in H^2(\Omega)^N: V|_{\partial \Omega} = 0 \mbox{ and } \partial_{\nu} V|_{\partial \Omega} = 0
    \big\}.
\end{equation}
The set $H$ is called the set of admissible solutions and the set $H_0$ is clearly a closed subspace of $H^2(\Omega)^N.$
Define the operator
\begin{equation}
    \mathcal{L}(U) = \Delta U - SU + \mathcal{F}(\x, U, \nabla U)
    \label{L}
\end{equation} for all vector valued function $U \in H$.
Heuristically, we want to solve the equation $\mathcal{L} U = 0$ in $H$. 
Let $U_0$ be an arbitrary initial function $U_0 \in H$. 
Based on the Newton method, we set $U_1 = U_0 + h_1 \in H$ where $h_1$ solves
\begin{equation}
\left\{
\begin{array}{ll}	
    \mathcal{L}(U_0) + \Delta h_1 - Sh_1 + D\mathcal{F}(\x, U_0, \nabla U_0)(h_1) = 0 &\x \in \Omega,\\
    h_1 = 0 &\x \in \partial \Omega,\\
    \partial_{\nu} h_1 = 0 &\x \in \partial \Omega.
\end{array}
\right.
    \label{4.3}
\end{equation} 
In \eqref{4.3},
\begin{equation}
	D\mathcal{F}(x, \boldsymbol{b}, A)(h_1) = \Big(\nabla_{\boldsymbol{b}} \mathfrak{f}_m(\x, \boldsymbol{b}, A) \cdot h_1 + \nabla_{A} \mathfrak{f}_m(\x, \boldsymbol{b}, A) : \nabla h_1\Big)_{m = 1}^N
	\label{linearization}
\end{equation}
for all $\boldsymbol{b} \in \R^N$ and $A \in \R^{N \times d}.$
In \eqref{linearization}, the notations $``\cdot"$ and $``:"$ are the usual inner products in $\R^N$ and in $\R^{N \times d}$ respectively.
    Repeating the process replacing $U_0$ by $U_1$, we can find $U_2 \in H$ and then a sequence $\{U_n\}_{n \geq 0} \in H$.
More precisely, for each $n \geq 1$, we define $U_n = U_{n - 1} + h_n$ where $h_n$ solves 
\begin{equation}
\left\{
\begin{array}{ll}	
    \mathcal{L}(U_{n - 1}) + \Delta h_n - Sh_n + D\mathcal{F}(\x, U_{n - 1}, \nabla U_{n - 1})(h_n) = 0 &\x \in \Omega,\\
    h_n = 0 &\x \in \partial \Omega,\\
    \partial_{\nu} h_n = 0 &\x \in \partial \Omega.
\end{array}
\right.
    \label{4.6}
\end{equation}

Due to both Dirichlet and Neumann boundary conditions, both problems \eqref{4.3} and \eqref{4.6} are over-determined. 
They might not have a solution.
However, this difficulty does not lead to any error in our analysis. 
The main reason is that we only need to find their ``best fit" solutions by the Carleman quasi-reversibility method. See Section \ref{sec_Car_quasi} for a brief discussion about the Carleman quasi-reversibility method. 

\begin{Remark}
In general, since we do not impose any special structure for the  nonlinearity $F$, the vector $\mathcal F$ has no special structure, either.
In this general case, there is no guarantee for the convergence of the sequence $\{U_n\}_{n \geq 0}$ to the true solution to \eqref{3.13}, especially when $U_0$ is far away from the true solution to \eqref{3.13}.
However, we have figured out in \cite{LeNguyenTran:preprint2021} that when we employ the Carleman quasi-reversibility method to solve  \eqref{4.6} to construct the sequence $\{U_n\}_{n \geq 0}$, the desired convergence is true.
In Section \ref{sec_convergence}, we  improve convergence result in \cite{LeNguyenTran:preprint2021} in the sense that the new convergence is in the $H^2$ norm while the similar convergence  in \cite{LeNguyenTran:preprint2021} is with respect to the $H^1$ norm.
On the other hand, in this paper we study a noise analysis, which was not done in \cite{LeNguyenTran:preprint2021}.
Since this method is based on a Carleman estimate and is inspired by the Newton method, we name our approach the Carleman-Newton  method.
\end{Remark}

\subsection{A Carleman estimate and the Carleman quasi-reversibility method} \label{sec_Car_quasi}

We recall a Carleman estimate, which serves as an important tool to prove the convergence of the Carleman Newton method to solve problem \ref{3.13}.
We have the following lemma.
\begin{Lemma}[Carleman estimate, see  \cite{LeNguyen:jiip2022}]
	Let $\x_0$ be a point in $\R^d \setminus \overline \Omega$ such that $r(\x) = |\x - \x_0| > 1$ for all $\x \in \Omega$.
	Let $b > \max_{\x \in \overline \Omega} r(\x)$ be a fixed constant.
	There exist positive constants $\beta_0$  depending only on $b$, $\x_0$, $\Omega$ and $d$ such that
	for all function $v \in C^2(\overline \Omega)$ satisfying 
	 \[
	 	v(\x) = \partial_{\nu} v(\x) = 0  \quad \mbox{for all } \x \in \partial \Omega,
	 \]
	the following estimate holds true
\begin{multline}
	\int_{\Omega} e^{2\lambda b^{-\beta} r^{\beta}(\x)}|\Delta v(\x)|^2 d\x
	\geq 
	\frac{C}{\lambda \beta^{7/4} b^{-\beta}} \int_{\Omega}e^{2\lambda b^{-\beta} r^\beta(\x)} r^{2\beta}(\x)|D^2v(\x)|^2 d\x
	\\
	+ 	C \lambda^3 \beta^4 b^{-3 \beta} \int_{\Omega} r^{2\beta}(\x) e^{2\lambda b^{-\beta} r^{\beta}}|v(\x)|^2 d\x
	\\
	+ C \lambda \beta^{1/2} b^{-\beta}\int_{\Omega} e^{2\lambda b^{-\beta} r^{\beta}(\x)} |\nabla v(\x)|^2 d\x
	\label{Car est}
\end{multline}
	for $\beta \geq \beta_0$ and $\lambda \geq \lambda_0$.
		Here, $D^2 v = (v_{x_i x_j} )_{i, j = 1}^d$ is the Hessian matrix of $v$, $\lambda_0 = \lambda_0(b, \Omega, d, \x_0) > 1$ is a positive number with $\lambda_0 b^{-\beta} \gg 1$ and $C = C(b, \Omega, d, \x_0) > 1$ is a constant.  These numbers depend only on listed parameters.
	 \label{carleman estimate 1}
\end{Lemma}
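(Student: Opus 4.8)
\emph{Proof idea.} The estimate is of classical Carleman type for the operator $\Delta$ and is proved in \cite{LeNguyen:jiip2022}; here I outline how I would establish it by the standard conjugate‑and‑split argument. The plan is to set $\psi(\x)=b^{-\beta}r^{\beta}(\x)$, so the weight is $e^{2\lambda\psi}$, and to conjugate: with $w=e^{\lambda\psi}v$ one computes
\[
    e^{\lambda\psi}\Delta\!\left(e^{-\lambda\psi}w\right)=\underbrace{\Delta w+\lambda^{2}|\nabla\psi|^{2}w}_{=:\,P_{1}w}+\underbrace{\big(-2\lambda\nabla\psi\cdot\nabla w-\lambda(\Delta\psi)w\big)}_{=:\,P_{2}w}.
\]
Since $v=\partial_{\nu}v=0$ on $\partial\Omega$ and $e^{-\lambda\psi}$ is smooth and nonvanishing, $w$ and all its first derivatives vanish on $\partial\Omega$ as well. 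Expanding $\int_{\Omega}e^{2\lambda\psi}|\Delta v|^{2}\,d\x=\int_{\Omega}|P_{1}w+P_{2}w|^{2}\,d\x$, one discards the nonnegative term $\|P_{2}w\|_{L^{2}(\Omega)}^{2}$ while retaining a small multiple $\varepsilon\|P_{1}w\|_{L^{2}(\Omega)}^{2}$ for later use; the key quantity is then the cross term $2\int_{\Omega}(P_{1}w)(P_{2}w)\,d\x$.

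Next I would integrate that cross term by parts. All boundary contributions vanish because $w$ and $\nabla w$ vanish on $\partial\Omega$, and after the cancellation of the two $\lambda\int_{\Omega}(\Delta\psi)|\nabla w|^{2}$ terms one obtains
\[
    2\int_{\Omega}(P_{1}w)(P_{2}w)\,d\x=4\lambda\int_{\Omega}D^{2}\psi(\nabla w,\nabla w)\,d\x+4\lambda^{3}\int_{\Omega}D^{2}\psi(\nabla\psi,\nabla\psi)\,w^{2}\,d\x-\lambda\int_{\Omega}(\Delta^{2}\psi)\,w^{2}\,d\x.
\]
Then one inserts the explicit weight: from $\nabla(r^{\beta})=\beta r^{\beta-2}(\x-\x_{0})$ a short computation gives, for $\beta$ large, $D^{2}\psi(\nabla w,\nabla w)\ge c\,\beta b^{-\beta}r^{\beta-2}|\nabla w|^{2}$, $D^{2}\psi(\nabla\psi,\nabla\psi)=\beta^{3}(\beta-1)b^{-3\beta}r^{3\beta-4}$, and $|\Delta^{2}\psi|\le C\beta^{4}b^{-\beta}r^{\beta-4}$. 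Using $r(\x)>1$ on $\Omega$ and $\lambda b^{-\beta}\gg1$, the term $\lambda\int_{\Omega}(\Delta^{2}\psi)w^{2}$ is absorbed into the $\lambda^{3}$ term; returning to $v$ through $w=e^{\lambda\psi}v$ (which only creates $v^{2}$‑ and $|\nabla v|^{2}$‑terms already dominated by the ones at hand) and using $r>1$ to lower the $r$‑powers, one reaches the preliminary estimate
\[
    \int_{\Omega}e^{2\lambda\psi}|\Delta v|^{2}\,d\x\ \ge\ C\lambda^{3}\beta^{4}b^{-3\beta}\!\int_{\Omega}r^{2\beta}e^{2\lambda\psi}|v|^{2}\,d\x+C\lambda\beta^{1/2}b^{-\beta}\!\int_{\Omega}e^{2\lambda\psi}|\nabla v|^{2}\,d\x+\varepsilon\|P_{1}w\|_{L^{2}(\Omega)}^{2}.
\]
The reduction from a full power of $\beta$ to $\beta^{1/2}$ on the gradient term reflects the loss in the change of variables and the fraction reserved for the Hessian term.

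Finally, to recover the Hessian term I would combine the retained $\varepsilon\|P_{1}w\|_{L^{2}(\Omega)}^{2}$ with the identity $\Delta w=P_{1}w-\lambda^{2}|\nabla\psi|^{2}w$ and the weighted second‑order elliptic identity $\int_{\Omega}\rho\,|D^{2}w|^{2}\,d\x=\int_{\Omega}\rho\,|\Delta w|^{2}\,d\x+(\text{lower‑order commutators with }\nabla\rho)$, valid for $w$ with vanishing Cauchy data, taken with $\rho=r^{2\beta}$; converting $D^{2}w\leftrightarrow D^{2}v$ introduces only lower‑order terms controlled by the right‑hand side already produced. Choosing $\varepsilon\sim(\lambda\beta^{7/4}b^{-\beta})^{-1}$ and absorbing, using once more $r>1$, $\beta\ge\beta_{0}$ and $\lambda b^{-\beta}\gg1$, yields \eqref{Car est}. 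I expect this final step to be the main obstacle: one must carry the exact powers of $\lambda$, $\beta$, $b$ and $r$ through two changes of variables and several integrations by parts and verify that every lower‑order term is genuinely dominated. This is where each of the hypotheses $r>1$, $\beta\ge\beta_{0}$, and $\lambda_{0}b^{-\beta}\gg1$ is used, and it is the source of the fractional exponents $\beta^{1/2}$ and $\beta^{7/4}$.
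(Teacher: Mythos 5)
The paper does not prove this lemma at all: it is imported verbatim from \cite{LeNguyen:jiip2022}, so there is no in-paper argument to compare yours against. Judged on its own, your sketch follows the standard (and, in the cited source, the actual) route: conjugate with $w=e^{\lambda\psi}v$, split the conjugated operator into the symmetric part $P_1$ and the antisymmetric part $P_2$, and extract positivity from the cross term. Your cross-term identity is correct (the $\lambda(\Delta\psi)|\nabla w|^2$ and $\lambda^3|\nabla\psi|^2(\Delta\psi)w^2$ contributions indeed cancel, leaving exactly the three terms you wrote), and your explicit computations for $\psi=b^{-\beta}r^{\beta}$, including $D^2\psi(\nabla\psi,\nabla\psi)=\beta^{3}(\beta-1)b^{-3\beta}r^{3\beta-4}$, are right.

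The one place where your argument is genuinely open, and which you partly acknowledge, is the final recovery of the Hessian term with the stated prefactor $C/(\lambda\beta^{7/4}b^{-\beta})$ and weight $r^{2\beta}$. With your uniform choice $\varepsilon\sim(\lambda\beta^{7/4}b^{-\beta})^{-1}$, absorbing $\varepsilon\lambda^{4}|\nabla\psi|^{4}w^{2}\sim\varepsilon\lambda^{4}\beta^{4}b^{-4\beta}r^{4\beta-4}w^{2}$ into the available $\lambda^{3}\beta^{4}b^{-3\beta}r^{3\beta-4}w^{2}$ requires $r^{\beta}/\beta^{7/4}\le c$, which is not uniform in $\beta\ge\beta_{0}$ since $1<r<b$ on $\Omega$; so the specific exponents $\beta^{7/4}$ and $\beta^{1/2}$ do not follow from the bookkeeping as you have set it up, and a more careful (possibly $\x$-dependent) splitting is needed to get the lemma exactly as stated. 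This does not affect anything downstream in the paper, which only ever uses Corollary \ref{carleman estimate} at a fixed $\beta=\beta_{0}$ with all $\beta$- and $r$-powers swallowed into the constant $C$; for that weaker statement your argument does close. But as a proof of Lemma \ref{carleman estimate 1} in the precise form displayed, the last step is asserted rather than derived, and for a complete argument you should defer to, or reproduce, the computation in \cite{LeNguyen:jiip2022}.
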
	

\begin{Corollary}
	Recall $\beta_0$ and $\lambda_0$ as in Lemma  \ref{carleman estimate 1}.
	Fix $\beta = \beta_0$ and let the constant $C$ depend on $\x_0,$ $\Omega,$ $d$ and $\beta$. 
	There exists a constant $\lambda_0$ depending only on $\x_0,$ $\Omega,$ $d$ and $\beta$ such that
  for all function $v \in H^2(\Omega)$ with 
	\[
		v(\x) = \partial_{\nu} v(\x) = 0 
		\quad \mbox{ on } \partial \Omega,
	\] 
	we have
	\begin{multline}
	\int_{\Omega} e^{2\lambda b^{-\beta} r^{\beta}(\x)}|\Delta v(\x)|^2 d\x
	\geq 
	C\lambda^{-1}\int_{\Omega}e^{2\lambda b^{-\beta} r^\beta(\x)} |D^2 v(\x)|^2 d\x
	\\
	+ 	C \lambda^3 \int_{\Omega}  e^{2\lambda b^{-\beta} r^{\beta}}|v(\x)|^2 d\x
	+ C \lambda \int_{\Omega} e^{2\lambda b^{-\beta} r^{\beta}(\x)} |\nabla v(\x)|^2 d\x	
	\label{33}
\end{multline}
	for all $\lambda \geq \lambda_0$.
	\label{carleman estimate}
\end{Corollary}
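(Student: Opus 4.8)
The plan is to obtain Corollary \ref{carleman estimate} from Lemma \ref{carleman estimate 1} by pure bookkeeping together with one density argument: (i) freeze the parameter $\beta$ at the admissible value $\beta_0$, (ii) absorb every $\beta$- and $b$-dependent prefactor into new constants, (iii) discard the weights $r^{2\beta}(\x)$ on the right-hand side of \eqref{Car est} using the standing hypothesis $r(\x)>1$ on $\Omega$, and (iv) extend the resulting inequality from $C^2(\overline\Omega)$ to $H^2(\Omega)$ by density. No new Carleman-type computation is required.

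First I would put $\beta=\beta_0$ in \eqref{Car est}. Since $\beta_0$ depends only on $b$, $\x_0$, $\Omega$ and $d$, the quantities $b^{-\beta_0}$, $\beta_0^{7/4}$, $\beta_0^{4}$ and $\beta_0^{1/2}$ are fixed positive constants depending only on those parameters. Hence the three coefficients $\frac{C}{\lambda\beta_0^{7/4}b^{-\beta_0}}$, $C\lambda^3\beta_0^{4}b^{-3\beta_0}$ and $C\lambda\beta_0^{1/2}b^{-\beta_0}$ in \eqref{Car est} can be rewritten as $C_1\lambda^{-1}$, $C_2\lambda^{3}$ and $C_3\lambda$ with new constants $C_1,C_2,C_3>0$ depending only on $\x_0,\Omega,d,\beta_0$; setting $C=\min\{C_1,C_2,C_3\}$ produces a single constant valid for all three terms. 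The threshold $\lambda_0$ from Lemma \ref{carleman estimate 1}, which is only required to satisfy $\lambda_0 b^{-\beta_0}\gg 1$, is now a fixed number depending only on $\x_0,\Omega,d,\beta_0$, exactly as claimed in the corollary.

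Next I would invoke $r(\x)=|\x-\x_0|>1$ on $\Omega$, so that $r^{2\beta_0}(\x)\ge 1$ there. This gives, on the right-hand side of \eqref{Car est}, $\int_{\Omega} e^{2\lambda b^{-\beta_0} r^{\beta_0}}\,r^{2\beta_0}|D^2v|^2\,d\x \ge \int_{\Omega} e^{2\lambda b^{-\beta_0} r^{\beta_0}}|D^2v|^2\,d\x$ and similarly $\int_{\Omega} r^{2\beta_0}e^{2\lambda b^{-\beta_0} r^{\beta_0}}|v|^2\,d\x \ge \int_{\Omega} e^{2\lambda b^{-\beta_0} r^{\beta_0}}|v|^2\,d\x$, while the gradient term already carries no $r$-weight. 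Substituting these lower bounds and the relabeled constants into \eqref{Car est} yields \eqref{33} for every $v\in C^2(\overline\Omega)$ with $v=\partial_\nu v=0$ on $\partial\Omega$.

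Finally I would upgrade the admissible class to $H^2(\Omega)$. Given $v\in H^2(\Omega)$ with $v|_{\partial\Omega}=\partial_\nu v|_{\partial\Omega}=0$, pick $v_k\in C_c^\infty(\Omega)$ with $v_k\to v$ in $H^2(\Omega)$; this is possible because, for the smooth bounded domain $\Omega$, the space $\{w\in H^2(\Omega): w|_{\partial\Omega}=\partial_\nu w|_{\partial\Omega}=0\}$ equals $H^2_0(\Omega)$, in which $C_c^\infty(\Omega)$ is dense, and each such $v_k$ lies in $C^2(\overline\Omega)$ with vanishing Cauchy data. For each fixed $\lambda$ the weight $e^{2\lambda b^{-\beta_0} r^{\beta_0}(\x)}$ is bounded between two positive constants on $\overline\Omega$, so all the integrals occurring in \eqref{33} are continuous with respect to $H^2(\Omega)$ convergence; passing to the limit $k\to\infty$ in the inequality for $v_k$ gives \eqref{33} for $v$. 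The only slightly delicate point of the whole argument is this last density fact, and it is entirely classical; everything else is a matter of relabeling constants and exploiting $r>1$.
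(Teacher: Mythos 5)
Your proposal is correct and matches the paper's (implicit) reasoning: the paper states Corollary \ref{carleman estimate} without proof as an immediate consequence of Lemma \ref{carleman estimate 1}, and the steps you spell out --- fixing $\beta=\beta_0$, absorbing the $\beta$- and $b$-dependent prefactors into a single constant, discarding the weights $r^{2\beta}\ge 1$ via $r>1$ on $\Omega$, and extending from $C^2(\overline\Omega)$ to $H^2(\Omega)$ by density of $C_c^\infty(\Omega)$ in $H^2_0(\Omega)$ --- are precisely what that deduction requires.
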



We will now explain the Carleman quasi-reversibility method to solve \eqref{4.6}.
Let $\beta_0$ and $\lambda_0$ be as in Corollary \ref{carleman estimate}. Fix $\beta = \beta_0$.
For $\lambda > \lambda_0$,
given a vector valued function $U_{n - 1} \in H$, $n \geq 1$, we say that 
\begin{multline}
    h_n = \underset{\varphi \in H_0}{\rm argmin} \Big[\int_{\Omega}e^{2\lambda b^{-\beta} r^\beta(\x)}\big|\Delta \varphi - S\varphi + D\mathcal{F}(\x, U_{n-1}, \nabla U_{n-1})(\varphi) + \mathcal{L}(U_{n-1}) \big|^2d\x 
    \\
    + \epsilon \|U_{n-1} + \varphi\|_{H^2(\Omega)^N}^2\Big]
    \label{h_define}
\end{multline}
is the solution to \eqref{4.6} due to the Carleman quasi-reversibility method. 
The number $\epsilon \in (0, 1)$ is called the regularization parameter.
The presence of the Carleman weight function $e^{2\lambda b^{-\beta} r^{-\beta}(\x)}$ is the key for us to establish the convergence result in this paper. The important role of this Carleman weight function suggests the name Carleman quasi-reversibility method.
The  quasi-reversibility method without the presence of a Carleman weight function was first introduced in \cite{LattesLions:e1969}.
The convergence of the quasi-reversibility method as $\epsilon \to 0$ was proved in \cite{Nguyen:CAMWA2020}.

\begin{Remark}
The unique minimizer in \eqref{h_define} can be proved by using the same arguments in   \cite[Theorem 4.1]{Le:preprintCONN2022}. For brevity, we do not repeat the proof here.
By using the Carleman quasi-reversibility method, we do not find the exact solution to \eqref{4.6}. 
Rather, we compute the best fit solution $h_n$.
This feature is important because  \eqref{4.6} might not have a solution.
When \eqref{4.6} does have a solution, the reader can find the proof of the convergence of this best fit solution to the true solution as $\epsilon \to 0$ in \cite{Nguyen:CAMWA2020}.
\label{rem4}
\end{Remark}

Inspired by the heuristic arguments in Section \ref{sec_heu}, we propose Algorithm \ref{alg} to numerically solve \eqref{3.13}.
The main aim of this section is to prove the efficiency of this algorithm.
\begin{algorithm}[ht]
\caption{\label{alg}The procedure to compute the numerical solution to \eqref{3.13}}
	\begin{algorithmic}[1]
	\STATE \label{step1_alg} Choose a regularized parameter $0 < \epsilon \ll 1$, a Carleman weight function $e^{2\lambda b^{-\beta }r^{\beta}}$, and a threshold number $0 < \kappa_0 \ll 1.$ 
	\STATE \label{step2_alg}  
	Choose an initial solution $U_0 \in H.$
	\STATE  Set $n = 1$. 
	\STATE \label{h}
	 Let $U_n = U_{n - 1} + h_n$ where $h_n \in H_0$ is the miminizer of $J_{n-1}: H_0 \to \R$ defined as
\[
	J_{n - 1}(\varphi) = \int_{\Omega}e^{2\lambda b^{-\beta} r^\beta(\x)}\big| \Delta \varphi - S\varphi + D\mathcal{F}(\x, U_{n - 1}, \nabla U_{n - 1})(\varphi) + \mathcal{L} (U_{n-1})\big|^2 d\x 
	+ \epsilon \|U_{n - 1} + \varphi\|^2_{H^2(\Omega)}
\]
for all $\varphi \in H_0$.
    \IF{$\|U_{n} - U_{n - 1}\|_{L^{\infty}} \leq \kappa_0$}
		\STATE Reassign $n:=n+1$. 
		\STATE Go back to step \ref{h}.
	\ELSE
	    \STATE \label{s9} Set the computed solution $U^{\rm comp} = U_n.$
	\ENDIF
\end{algorithmic}
\end{algorithm}

\section{The global convergence of the Carleman-Newton method}\label{sec_convergence}

The following result is one of the main theorems of this paper. We first consider the case when $\mathcal F$ has a finite $C^2$ norm.
If $\|\mathcal F\|_{C^2} = \infty$, we can apply the truncating technique in the proof of Theorem \ref{thm_uni} to reduce the problem to the former case, see Remark \ref{rem53} for details.

In this section, we consider the case when the observed data of Problem \ref{isp} are noisy.
As a result, the values of the boundary data of \eqref{3.13} are not exact. 
Denote by $G_{0}^{\delta}$ and $G_1^{\delta}$ the noisy data with the noise level $\delta$. 
It is worth mentioning that equation \eqref{3.13} with $G_0$ and $G_1$ replaced by $G_{0}^{\delta}$ and $G_1^{\delta}$, respectively, might not have a solution.
Let $G_{0}^{*}$ and $G_1^{*}$ be the noiseless versions of $G_0$ and $G_1$ respectively. 
By noise level $\delta$, we mean that
\begin{equation}
\inf \big\{\mathcal{E} \in H^2(\Omega)^N: \mathcal{E}|_{\partial \Omega} = G_0^{\delta} - G_0^* 
\mbox{ and } 
\partial_{\nu}\mathcal{E}|_{\partial \Omega} = G_1^{\delta} - G_1^*
\big\}
    \leq \delta.
    \label{noise}
\end{equation}
Due to \eqref{noise}, there is an error vector value function $\mathcal{E}$ satisfying
    \begin{equation}
    \left\{
        \begin{array}{l}
             \|\mathcal{E}\|_{H^2(\Omega)} < 2\delta,  \\
             \mathcal{E}|_{\partial \Omega} = G_0^\delta - G_0^*,\\ 
             \partial_{\nu}\mathcal{E}|_{\partial \Omega} = G_1^\delta - G_1^*.
        \end{array}
    \right.
    \label{5,2}
    \end{equation}
\begin{Remark}
  The existence of the error function $\mathcal E$ in \eqref{5,2} implies that the noise must be the restriction of a vector valued function in $H^2(\Omega)$ onto $\partial \Omega$. Hence, the noise must be in $H^{3/2}(\partial \Omega)$.
  This assumption might not be realistic since the noise in measurement might be not smooth.
  There are several techniques to smooth  out the noise; for e.g., the Tikhonov method and the b-spline method.
  However, in this paper, we do not have to apply one of these techniques to obtain numerical results. The proposed method works with nonsmooth data. 
  That means, the Carleman-Newton method is stronger than what we can rigorously prove. 
\end{Remark}    
    
Given the noisy data, the set of admissible solutions $H$, defined in \eqref{setH}, becomes
\begin{equation}
     H^{\delta} = \big\{
        V \in H^2(\Omega)^N: V|_{\partial \Omega} = G_0^\delta \mbox{ and } \partial_{\nu} V|_{\partial \Omega} = G_1^\delta
    \big\}.
\end{equation}
Since $G_{0}^{*}$ and $G_1^{*}$ contain no noise, we can assume that
\begin{equation}
    \left\{
    \begin{array}{ll}
         \Delta U^* - SU^* + \mathcal{F}(\x, U^*, \nabla U^*) = 0 & \x \in \Omega, \\
         U^*(\x) = G_0^*(\x)& \x \in \partial \Omega,\\
         \partial_{\nu} U^*(\x) = G_1^*(\x) &\x \in \partial \Omega
    \end{array}
    \right.
    \label{quasi-system}
\end{equation}
has a unique solution $U^*$.

\begin{Theorem}
    Assume that the source function $p$ is in the class $\mathcal{P}$ and assume that $\|\mathcal F\|_{C^2} < \infty$.
     Let $U_0$ be a vector valued function in $H^\delta$.
     Let $\{U_n\}_{n \geq 0}$ be the sequence constructed in Algorithm \ref{alg}.
     Assume that \eqref{quasi-system} has a unique solution $U^*$. 
     For $\lambda > \lambda_0$ and $\beta = \beta_0$ as in Corollary \ref{carleman estimate}, we have
     \begin{multline}
     \int_{\Omega} e^{2\lambda b^{-\beta} r^\beta(\x)} \big(\lambda^{-2} |D^2 (U_n - U^*)|^2 
	+ 	 |U_n - U^*|^2 
	+  |\nabla (U_n - U^*)|^2\big) d\x
	\\
	\leq
	\Big(\frac{C}{\lambda}\Big)^{n+1}\int_{\Omega} e^{\lambda b^{-\beta} r^{\beta}(\x)}  \big(  \lambda^{-2}|D^2 (U_0 - U^*)|^2 + |U_0 - U^*|^2 + |\nabla (U_0 - U^*)|^2\big) d\x 
	\\
	+ \Big(\frac{C}{\lambda}\Big)^{n+1}\int_{\Omega} e^{\lambda b^{-\beta} r^{\beta}(\x)}  \big(  \lambda^{-2}|D^2 \mathcal E|^2 + |\mathcal E|^2 + |\nabla \mathcal E|^2\big) d\x 
	\\
		+ \frac{C/\lambda}{1 - C/\lambda}
		\Big[
		\int_{\Omega} e^{2\lambda b^{-\beta} r^{\beta}(\x)} \big|
	    \Delta \mathcal{E} - S\mathcal{E}
	\big|^2d\x +
		\epsilon\|\mathcal E\|_{H^2(\Omega)^2}^2
		+\epsilon\|U^*\|_{H^2(\Omega)^2}^2
		\Big]
		\label{est_conv}
\end{multline}
     where $C$ is a constant depending only on $b,$ $\Omega$, $d$ and $\x_0$.
     \label{thm2}
\end{Theorem}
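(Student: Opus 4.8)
The plan is to prove the estimate \eqref{est_conv} by induction on $n$, using the Carleman estimate from Corollary \ref{carleman estimate} together with the minimizing property of $h_n$ from \eqref{h_define}. Set $H_n = U_n - U^*$. The key observation is that $H_n$ satisfies an equation with a boundary term coming only from the noise: since $U_n|_{\partial\Omega} = G_0^\delta$, $\partial_\nu U_n|_{\partial\Omega} = G_1^\delta$, and $U^*|_{\partial\Omega} = G_0^*$, $\partial_\nu U^*|_{\partial\Omega} = G_1^*$, we have that $H_n - \mathcal{E} \in H_0$, i.e. $H_n$ has homogeneous Cauchy data up to the error function $\mathcal{E}$. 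This is what lets us apply the Carleman estimate to $H_n - \mathcal{E}$ (or to a suitable modification) after absorbing the $\mathcal{E}$-dependent quantities into the right-hand side.

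First I would write down the equation satisfied by $h_n$ implicitly through its variational characterization \eqref{h_define}: since $h_n$ minimizes $J_{n-1}$ over $H_0$, testing against $h_n - (H_{n-1} - \mathcal{E}) \in H_0$ (valid because $H_n - \mathcal{E}\in H_0$ forces $h_n - (H_{n-1}-\mathcal E) = H_n - \mathcal E \in H_0$) gives a Euler–Lagrange inequality/identity. The natural route is to compare $J_{n-1}(h_n)$ with $J_{n-1}(H_{n-1} - \mathcal{E})$, using $J_{n-1}(h_n)\le J_{n-1}(H_{n-1}-\mathcal E)$. On the left we get the Carleman-weighted norm of the linearized residual evaluated at $h_n$, plus $\epsilon\|U_n\|^2_{H^2}$. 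On the right, plugging $\varphi = H_{n-1}-\mathcal E$ into the functional, the combination $\Delta\varphi - S\varphi + D\mathcal F(\x,U_{n-1},\nabla U_{n-1})(\varphi) + \mathcal L(U_{n-1})$ becomes, after using that $\mathcal L(U^*) = 0$, a Taylor remainder of $\mathcal F$ of second order in $U_{n-1} - U^*$ plus a term linear in $\mathcal E$ of the form $-\Delta\mathcal E + S\mathcal E - D\mathcal F(\cdots)(\mathcal E)$. Here the bound $\|\mathcal F\|_{C^2}<\infty$ is essential to control the quadratic remainder by $\mathcal C |H_{n-1}|^2 + \mathcal C|\nabla H_{n-1}|^2$ pointwise (with a constant depending on $\|\mathcal F\|_{C^2}$), and the linearization $D\mathcal F$ is bounded by $\|\mathcal F\|_{C^1}$.

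Next, I would apply the Carleman estimate \eqref{33} to $v = H_n - \mathcal E \in H_0$. Its Laplacian is $\Delta H_n - \Delta \mathcal E$, and $\Delta H_n = \Delta h_n + \Delta H_{n-1}$; using the relation $\Delta h_n - Sh_n + D\mathcal F(\x,U_{n-1},\nabla U_{n-1})(h_n) + \mathcal L(U_{n-1}) = (\text{QR residual})$ — or more precisely just keeping the Carleman-weighted $L^2$ norm of $\Delta(H_n-\mathcal E)$ — I want to bound it by the Carleman-weighted norm of the linearized residual at $h_n$ (controlled by the left side of the $J_{n-1}$ comparison above), plus lower-order terms $S h_n$, $D\mathcal F(\cdots)(h_n)$, $D\mathcal F(\cdots)(\mathcal E)$, $S\mathcal E$, $\Delta \mathcal E$. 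The zeroth and first order terms in $h_n$ (hence in $H_n$ and $H_{n-1}$) are then absorbed into the left-hand side of \eqref{33} by taking $\lambda$ large — this is the standard "large parameter absorption" that produces the factor $C/\lambda$. The quadratic remainder term, bounded by $\mathcal C(|H_{n-1}|^2 + |\nabla H_{n-1}|^2)$, is exactly of the form that appears on the left-hand side of \eqref{est_conv} at level $n-1$, again with a gain of $C/\lambda$ after dividing through. The $\epsilon$-terms and the pure $\mathcal E$-terms ($\Delta\mathcal E - S\mathcal E$ in the Carleman norm, $\epsilon\|\mathcal E\|^2$, $\epsilon\|U^*\|^2$) do not improve with $n$ and accumulate as a geometric series $\sum_{k=0}^{n}(C/\lambda)^k \le \frac{1}{1-C/\lambda}$, which is where the prefactor $\frac{C/\lambda}{1 - C/\lambda}$ in \eqref{est_conv} comes from; the $\mathcal E$-derivative data terms and the $U_0 - U^*$ data terms get the clean geometric factor $(C/\lambda)^{n+1}$.

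The main obstacle I anticipate is bookkeeping the exact powers of $\lambda$ so that every lower-order term is genuinely absorbable: the Carleman estimate \eqref{33} supplies $C\lambda^{-1}\int e^{2\lambda\cdots}|D^2v|^2$, $C\lambda^3\int e^{2\lambda\cdots}|v|^2$, and $C\lambda\int e^{2\lambda\cdots}|\nabla v|^2$ on its right side, and one must check that cross terms such as $\int e^{2\lambda\cdots}|Sh_n|^2$ and $\int e^{2\lambda\cdots}|D\mathcal F(\cdots)(h_n)|^2$ (which involve $|h_n|^2$ and $|\nabla h_n|^2$ with constants independent of $\lambda$) are controlled by the $\lambda^3$ and $\lambda$ gains, while the $D^2$ term needs the $\lambda^{-1}$ weight — this is why the statement carries the weight $\lambda^{-2}$ on the $|D^2(U_n - U^*)|^2$ term. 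One also needs a triangle inequality to pass between $v = H_n - \mathcal E$ and $H_n$ itself, paying a constant times the $\mathcal E$-terms, and Cauchy–Schwarz with care about whether to use $e^{\lambda b^{-\beta}r^\beta}$ or $e^{2\lambda b^{-\beta}r^\beta}$ as the weight on the various right-hand pieces (note that \eqref{est_conv} uses the single-exponent weight $e^{\lambda b^{-\beta} r^\beta}$ on the $(C/\lambda)^{n+1}$ terms and the double-exponent weight on the $\epsilon$-term block). The quadratic remainder estimate for $\mathcal F$ and the absorption of linear terms are conceptually routine once $\|\mathcal F\|_{C^2}<\infty$ is invoked; the delicate part is purely the power-of-$\lambda$ accounting and the induction step that turns "gain of $C/\lambda$ per iteration" into the stated closed-form bound.
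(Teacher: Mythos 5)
Your proposal follows essentially the same route as the paper's proof: the paper likewise works with $\varphi_n = U_n - U^* - \mathcal{E} \in H_0$, uses the variational characterization of $h_n$ from \eqref{h_define} together with the equation for $U^*$ to bound the Carleman-weighted linearized residual at $\varphi_n$ by the linearization error at level $n-1$ plus the $\mathcal{E}$- and $\epsilon$-terms, then applies Corollary \ref{carleman estimate}, absorbs the lower-order terms for large $\lambda$, and iterates to obtain the factor $(C/\lambda)^{n+1}$ and the geometric sum $\frac{C/\lambda}{1-C/\lambda}$. The only (harmless) deviation is that you invoke the minimality inequality $J_{n-1}(h_n)\le J_{n-1}(\varphi)$ for the competitor $\varphi=-(H_{n-1}-\mathcal{E})$ (note the sign), whereas the paper uses the first-order optimality condition $DJ_{n-1}(h_n)(\varphi_n)=0$; both lead to the same estimate \eqref{5.7}. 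One point to be careful about when writing this up: the linearization error must be bounded \emph{linearly}, as in \eqref{5.9}, by $C(|\varphi_{n-1}|+|\nabla\varphi_{n-1}|)$ before squaring --- squaring the genuinely quadratic Taylor bound $C(|H_{n-1}|^2+|\nabla H_{n-1}|^2)$ that you mention would produce a quartic term that cannot be matched to the level-$(n-1)$ quantity in the induction without an a priori $L^\infty$ bound on $U_{n-1}-U^*$.
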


\begin{Corollary}
    Fix $\lambda > \lambda_0$ such that $\theta = C/\lambda \in (0, 1)$. 
    It follows from \eqref{est_conv} that
    \begin{equation}
        \|U_n - U^*\|_{H^2(\Omega)^N}^2 \leq \theta^{n+1} \|U_0 - U^*\|_{H^2(\Omega)^N}^2 + \frac{C\theta}{1 - \theta} (\delta^2 + \epsilon \|U^*\|_{H^2(\Omega)^N}^2). 
    \label{5,6}
    \end{equation}
    Hence, the sequence $\{U_n\}_{n \geq 0}$ strongly converges to $U^*$ in the $H^2$ norm regardless of the initial distance from $U_0$ to $U^*$. The error caused by the noise and the regularization technique is  $O(\delta + \sqrt{\epsilon}).$
    \label{col5.1}
\end{Corollary}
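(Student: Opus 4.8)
The plan is to treat \eqref{est_conv} as a black box and extract \eqref{5,6} from it by two elementary reductions: stripping the Carleman weight (which is legitimate because $\lambda$ and $\beta$ are now held fixed, so the weight is bounded above and below on the bounded domain) and replacing every quantity built from the error function $\mathcal E$ by a multiple of $\delta$. No part of the Carleman machinery behind Theorem \ref{thm2} needs to be revisited; the work is purely in converting the weighted, $\lambda$-graded integrals of \eqref{est_conv} into the clean $H^2$ norms of \eqref{5,6}.

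First I would record that, by the geometric setup of Lemma \ref{carleman estimate 1}, one has $1 < r(\x) \le b$ for every $\x \in \overline\Omega$. Hence, with $\lambda > \lambda_0$ and $\beta = \beta_0$ fixed, there are positive constants $m_\lambda \le M_\lambda$, depending only on $\lambda,\beta,b$, with
\[
    m_\lambda \le e^{\lambda b^{-\beta} r^\beta(\x)} \le e^{2\lambda b^{-\beta} r^\beta(\x)} \le M_\lambda
    \quad \text{for all } \x \in \overline\Omega.
\]
Since $\lambda > \lambda_0 > 1$ forces $\lambda^{-2} < 1$, the integrand on the left of \eqref{est_conv} dominates $\lambda^{-2}$ times the full $H^2$ density $|D^2(U_n-U^*)|^2 + |U_n-U^*|^2 + |\nabla(U_n-U^*)|^2$, while each integrand on the right is bounded above by that same full density. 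Combining these pointwise bounds with the weight bounds turns the left side of \eqref{est_conv} into a quantity $\ge c\,\lambda^{-2}\|U_n-U^*\|_{H^2(\Omega)^N}^2$ and the first two integrals on the right into $\le C\theta^{n+1}\big(\|U_0-U^*\|_{H^2(\Omega)^N}^2 + \|\mathcal E\|_{H^2(\Omega)^N}^2\big)$, with $\theta = C/\lambda$ as in the statement.

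Next I would dispose of the $\mathcal E$ terms using \eqref{5,2}, namely $\|\mathcal E\|_{H^2(\Omega)^N} < 2\delta$. This bounds the $\mathcal E$-integral by $C\delta^2$, and it also controls the remaining bracket: since $\Delta$ maps $H^2$ into $L^2$ and $S$ is a constant matrix, $\int_\Omega e^{2\lambda b^{-\beta} r^\beta}|\Delta\mathcal E - S\mathcal E|^2 d\x \le C\|\mathcal E\|_{H^2(\Omega)^N}^2 \le C\delta^2$, while $\epsilon\|\mathcal E\|_{H^2(\Omega)^N}^2 \le 4\epsilon\delta^2 \le C\delta^2$ because $\epsilon \in (0,1)$. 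Because $\theta \in (0,1)$ gives $\theta^{n+1} \le \theta/(1-\theta)$, the $\theta^{n+1}\delta^2$ contribution merges into the $\tfrac{C\theta}{1-\theta}[\,\cdots\,]$ bracket. Dividing through by $c\lambda^{-2}$ and absorbing every $\lambda$-dependent factor into a single constant $C$ then yields exactly \eqref{5,6}.

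Finally, the two stated conclusions are read off \eqref{5,6} directly. Letting $n \to \infty$ sends $\theta^{n+1} \to 0$ since $\theta \in (0,1)$, so $\|U_n - U^*\|_{H^2(\Omega)^N}^2$ converges to a bound of size $\tfrac{C\theta}{1-\theta}(\delta^2 + \epsilon\|U^*\|_{H^2(\Omega)^N}^2)$ no matter how large $\|U_0 - U^*\|_{H^2(\Omega)^N}$ is, which is the claimed global convergence. Taking square roots and using $\sqrt{a^2+b^2}\le a+b$ gives the residual error $O(\delta + \sqrt\epsilon)$. There is no serious obstacle here; the only points demanding care are the bookkeeping of the constant $C$, which now legitimately depends on the fixed $\lambda$ (equivalently, on the requirement $\theta \in (0,1)$), and the observation that weight-stripping is valid precisely because $\Omega$ is separated from $\x_0$, so that $r$ stays in the bounded interval $(1, b]$.
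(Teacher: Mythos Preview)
Your proposal is correct and is precisely the natural derivation the paper has in mind; the paper gives no separate proof of Corollary~\ref{col5.1} beyond the phrase ``It follows from \eqref{est_conv} that\ldots'', and your weight-stripping plus $\mathcal E\mapsto\delta$ substitution is exactly how one cashes that phrase out. The only cosmetic point is that after dividing through by the lower weight bound and by $\lambda^{-2}$, the coefficient of $\theta^{n+1}\|U_0-U^*\|_{H^2}^2$ picks up a $\lambda$-dependent constant, so strictly speaking \eqref{5,6} should read $C\theta^{n+1}$ rather than $\theta^{n+1}$ in the first term; you already flag this constant-bookkeeping issue, and it does not affect either of the stated conclusions.
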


\begin{Corollary}
    Let $U^{\rm comp} = U_n$ where $n$ is as in Step \ref{s9} of Algorithm \ref{alg}.
    Write $U^{\rm comp} = (u_1^{\rm comp}, \dots, u_N^{\rm comp})^{\rm T}$ and $U^* = (u^*_1, \dots, u^*_N)^{\rm T}.$
    Define 
    \begin{equation}
        p^{\rm comp}(\x) =  \sum_{m = 1}^N u^{\rm comp}_m(\x)\Psi_m(0)
        \mbox{ and }
        p^*(\x ) = \sum_{m = 1}^N u^*_m(\x)\Psi_m(0)
        \label{pcomp}
    \end{equation} for all $\x \in \Omega$.
    Due to \eqref{5,6}, we have
    \begin{equation}
        \|p^{\rm comp} - p^*\|_{H^2(\Omega)^N}^2 \leq C\theta^{n+1} \|U_0 - U^*\|_{H^2(\Omega)^N}^2 + \frac{C\theta}{1 - \theta} (\delta^2 + \epsilon \|U^*\|_{H^2(\Omega)^N}^2). 
    \label{5,7}
    \end{equation}
    Estimate \eqref{5,7} guarantee the convergence of Algorithm \ref{alg}.
    \label{col52}
\end{Corollary}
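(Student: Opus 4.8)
The plan is to deduce \eqref{5,7} directly from the convergence estimate \eqref{5,6} by recognizing that the passage from the vector $U$ to the scalar source $p$ in \eqref{pcomp} is a fixed bounded linear map. Concretely, I would view the assignment $U = (u_1,\dots,u_N)^{\rm T} \mapsto \sum_{m=1}^N u_m \Psi_m(0)$ as a bounded linear operator from $H^2(\Omega)^N$ into $H^2(\Omega)$ and control its operator norm by the scalar quantity $\big(\sum_{m=1}^N |\Psi_m(0)|^2\big)^{1/2}$, which is finite because both $N$ and the basis $\{\Psi_m\}_{m\ge 1}$ are fixed throughout. Applying this map to the difference $U^{\rm comp} - U^* = U_n - U^*$ then transfers the $H^2$ bound on $U_n - U^*$ already supplied by \eqref{5,6} onto $p^{\rm comp} - p^*$.

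In detail, the first step is to subtract the two definitions in \eqref{pcomp}, giving $p^{\rm comp} - p^* = \sum_{m=1}^N (u_m^{\rm comp} - u_m^*)\Psi_m(0)$, where $u_m^{\rm comp} - u_m^*$ is precisely the $m$-th component of $U^{\rm comp} - U^* = U_n - U^*$. The second step applies the triangle inequality in $H^2(\Omega)$ followed by the Cauchy--Schwarz inequality in $\R^N$ to obtain
\[
\|p^{\rm comp} - p^*\|_{H^2(\Omega)} \le \sum_{m=1}^N |\Psi_m(0)|\,\|u_m^{\rm comp} - u_m^*\|_{H^2(\Omega)} \le \Big(\sum_{m=1}^N |\Psi_m(0)|^2\Big)^{1/2}\|U_n - U^*\|_{H^2(\Omega)^N}.
\]
Squaring both sides and substituting the right-hand side of \eqref{5,6} for $\|U_n - U^*\|_{H^2(\Omega)^N}^2$ yields \eqref{5,7}, after absorbing the harmless prefactor $\sum_{m=1}^N |\Psi_m(0)|^2$ into the generic constant $C$.

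Since the argument is a single application of a bounded linear map, there is no genuine analytic obstacle; the only points worth flagging are bookkeeping ones. First, the constant $C$ appearing in \eqref{5,7} now implicitly depends on $N$ and on the chosen basis $\{\Psi_m\}$ through the factor $\sum_{m=1}^N |\Psi_m(0)|^2$, which is acceptable since these are fixed data of the approximation scheme. Second, $p^{\rm comp}$ and $p^*$ are scalar functions, so the norm on the left of \eqref{5,7} should be read as $\|\cdot\|_{H^2(\Omega)}$ rather than $\|\cdot\|_{H^2(\Omega)^N}$. With these conventions, estimate \eqref{5,7} follows, and letting $n \to \infty$ shows that $p^{\rm comp}$ converges to $p^*$ up to the irreducible error $O(\delta + \sqrt{\epsilon})$ caused by the noise and the regularization, which is exactly the asserted convergence of Algorithm \ref{alg}.
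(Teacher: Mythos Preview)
Your proposal is correct and matches the paper's approach: the paper simply asserts that \eqref{5,7} follows from \eqref{5,6} without writing out any details, and your argument supplies exactly the missing step, namely that the map $U \mapsto \sum_{m=1}^N u_m \Psi_m(0)$ is bounded from $H^2(\Omega)^N$ to $H^2(\Omega)$ with operator norm controlled by $\big(\sum_m |\Psi_m(0)|^2\big)^{1/2}$. Your remark that the left-hand side of \eqref{5,7} should carry the scalar norm $\|\cdot\|_{H^2(\Omega)}$ rather than $\|\cdot\|_{H^2(\Omega)^N}$ is also well taken.
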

  
\begin{Remark}
Due to \eqref{5,7}, the convergence of the method is $O(\theta^n)$ as $n \to \infty,$ which is exponentially fast.
Hence, the computational cost is not expensive.
On the other hand, the error in computation is $O(\delta + \sqrt{\epsilon})$ as $(\delta, \epsilon) \to (0, 0).$
\end{Remark}

\begin{Remark}
  The proof of Theorem \ref{thm2} is similar to that of \cite[Theorem 4.1]{LeNguyenTran:preprint2021} with suitable modifications. 
    These modifications are to improve the quality of the convergence. 
    In fact, the new convergence is with respect to the $H^2$ norm while the convergence in \cite[Theorem 4.1]{LeNguyenTran:preprint2021} is with respect to the $H^1$ norm. 
    Moreover, no noise analysis was done in \cite[Theorem 4.1]{LeNguyenTran:preprint2021} while the Lipschitz stability with respect to the noise is shown in \eqref{5,6}.
\end{Remark}
\begin{proof}[Proof of Theorem \ref{thm2}]
  
    Fix $n \geq 1$. 
    Since $h_n$ defined in Step \ref{h} of Algorithm \ref{alg} is the minimizer of $J_{n - 1}$ in $H_0$, by the variational principle, the Fr\'echet derivative  $DJ_{n - 1}(h_{n})(\varphi) $ vanishes for all $\varphi \in H_0$.
    We have
    \begin{multline}
        \int_{\Omega} e^{2\lambda b^{-\beta} r^{\beta}(\x)}\big[
            \Delta h_n - Sh_n + D\mathcal{F}(\x, U_{n-1}, \nabla U_{n-1})(h_n) + \mathcal{L}(U_{n-1})
        \big]
        \cdot
        \big[
            \Delta \varphi - S\varphi 
            \\
            + D\mathcal{F}(\x, U_{n-1}, \nabla U_{n-1})(\varphi)
        \big] d\x
        + \epsilon \langle U_{n - 1} + h_n, \varphi\rangle_{H^2(\Omega)^N} = 0
        \label{4,10}
    \end{multline}
    for all $\varphi \in H_0.$
    Due to the definition of $U_n$ in Step \ref{h} of Algorithm \ref{alg},  we have $h_n = U_n - U_{n-1}$. This, together with the definition of the operator $\mathcal{L}$ in \eqref{L} and the identity \eqref{4,10}, gives
    \begin{multline}
        \int_{\Omega} e^{2\lambda b^{-\beta} r^{\beta}(\x)}\big[
            \Delta U_n - SU_n + \mathcal F(\x, U_{n-1}, \nabla U_{n-1}) + D\mathcal{F}(\x, U_{n-1}, \nabla U_{n-1})(U_{n} - U_{n-1}) 
        \big]
        \\
        \cdot
        \big[
            \Delta \varphi - S\varphi + D\mathcal{F}(\x, U_{n-1}, \nabla U_{n-1})(\varphi)
        \big] d\x
        + \epsilon \langle U_{n}, \varphi\rangle_{H^2(\Omega)^N} = 0
        \label{4.11}
    \end{multline}
    for all $\varphi \in H_0.$
    On the other hand, since $U^*$ is a solution to \eqref{quasi-system}, we have
    \begin{equation}
        \int_{\Omega} e^{2\lambda b^{-\beta} r^{\beta}(\x)}\big[
            \Delta U^* - SU^* + \mathcal F(\x, U^*, \nabla U^*) 
        \big]
        \cdot
        \big[
            \Delta \varphi - S\varphi + D\mathcal{F}(\x, U_{n-1}, \nabla U_{n-1})(\varphi)
        \big] d\x = 0
        \label{4.12}
    \end{equation}
    for all $\varphi \in H_0.$
    Combining \eqref{4.11} and \eqref{4.12}, we obtain
    \begin{multline}
      \int_{\Omega} e^{2\lambda b^{-\beta} r^{\beta}(\x)}\Big[
            \Delta (U_n - U^*)
            - S(U_n - U^*) 
            \\
            + \mathcal{F}(\x, U_{n - 1}, \nabla U_{n - 1}) + D\mathcal{F}(\x, U_{n - 1}, \nabla U_{n - 1})(U_n  - U_{n - 1}) - \mathcal{F}(\x, U^*, \nabla U^*)
        \Big]
        \\
        \cdot
        \Big[
            \Delta \varphi - S\varphi 
            + D\mathcal{F}(\x, U_{n-1}, \nabla U_{n-1})(\varphi)
        \Big] d\x
        + \epsilon \langle U_{n} , \varphi\rangle_{H^2(\Omega)^N} = 0.
        \label{4.13}
    \end{multline}
    
    Let $\varphi_i = U_i - U^* - \mathcal{E} \in H_0$ for all $i \geq 1$ where $\mathcal E$ is the vector in \eqref{5,2}. 
    Using the test function $\varphi = \varphi_n$ in \eqref{4.13} gives
    \begin{multline}
    	 \int_{\Omega} e^{2\lambda b^{-\beta} r^{\beta}(\x)}\Big[
            \Delta (\varphi_n + \mathcal{E}) - S(\varphi_n + \mathcal{E}) + D\mathcal{F}(\x, U_{n - 1}, \nabla U_{n - 1})(\varphi_n)
            + \mathcal{F}(\x, U_{n - 1}, \nabla U_{n - 1}) 
            \\- \mathcal F(\x, U^*, \nabla U^*) 
            - D\mathcal F(\x, U_{n - 1}, \nabla U_{n - 1})(\varphi_{n - 1})
            \Big] 
            \cdot
            \Big[
            \Delta \varphi_n - S\varphi_n + D\mathcal{F}(\x, U_{n - 1}, \nabla U_{n - 1})(\varphi_n)
            \Big] d\x
            \\
        + \epsilon \langle \varphi_{n} + \mathcal{E} + U^*, \varphi_n\rangle_{H^2(\Omega)^N} = 0.
        \label{4.14}
    \end{multline}

    It follows from \eqref{4.14} that
    \begin{multline}
    	\int_{\Omega} e^{2\lambda b^{-\beta} r^{\beta}(\x)} \Big| 
		\Delta \varphi_n - S\varphi_n + D\mathcal{F}(\x, U_{n-1}, \nabla U_{n-1})(\varphi_n)
	\Big|^2d\x 
	+ \epsilon\|\varphi_n\|_{H^2(\Omega)^N}^2
	\\
	+ \epsilon\langle \mathcal{E}, \varphi_n\rangle_{H^2(\Omega)^N}
	+ \epsilon\langle U^*, \varphi_n\rangle_{H^2(\Omega)^N}
	= \int_{\Omega} e^{2\lambda b^{-\beta} r^{\beta}(\x)} \Big[
		\mathcal F(\x, U^*, \nabla U^*) - \mathcal{F}(\x, U_{n - 1}, \nabla U_{n - 1})  
		\\
		+ D\mathcal F(\x, U_{n - 1}, \nabla U_{n - 1})(\varphi_{n - 1})
	\Big] 
	\cdot \Big[ 
		\Delta \varphi_n - S\varphi_n + D\mathcal{F}(\x, U_{n-1}, \nabla U_{n-1})(\varphi_n)
	\Big]d\x 
	\\
	+
	\int_{\Omega} e^{2\lambda b^{-\beta} r^{\beta}(\x)} \Big[
		\Delta \mathcal{E} - S\mathcal{E}
	\Big] 
	\cdot \Big[ 
		\Delta \varphi_n - S\varphi_n + D\mathcal{F}(\x, U_{n-1}, \nabla U_{n-1})(\varphi_n)
	\Big]d\x. 
	\label{5.6}
	   \end{multline}
	   Using the inequality $|ab| \leq  a^2 + \frac{1}{4}b^2$ and \eqref{5.6}, we have
	   \begin{multline}
    	\int_{\Omega} e^{2\lambda b^{-\beta} r^{\beta}(\x)} \Big| 
		\Delta \varphi_n - S\varphi_n + D\mathcal{F}(\x, U_{n-1}, \nabla U_{n-1})(\varphi_n)
	\Big|^2d\x
	+ \epsilon\|\varphi_n\|_{H^2(\Omega)^N}^2
	+ \epsilon\langle \mathcal{E}, \varphi_n\rangle_{H^2(\Omega)^N}
	\\
	+ \epsilon\langle U^*, \varphi_n\rangle_{H^2(\Omega)^N}
	\leq  
	C\int_{\Omega} e^{2\lambda b^{-\beta} r^{\beta}(\x)} \Big|
		\mathcal F(\x, U^*, \nabla U^*) - \mathcal{F}(\x, U_{n - 1}, \nabla U_{n - 1}) 
		\\
		+ D\mathcal F(\x, U_{n - 1}, \nabla U_{n - 1})(\varphi_{n - 1})
	\Big|^2d\x
	+ C \int_{\Omega} e^{2\lambda b^{-\beta} r^{\beta}(\x)} \big|
	    \Delta \mathcal{E} - S\mathcal{E}
	\big|^2d\x.
	\label{5.7}
	 \end{multline}
	 Recall the inequality $(a - b)^2 \geq \frac{1}{2} a^2 - b^2$. We have
 \begin{multline}
    	\int_{\Omega} e^{\lambda b^{-\beta} r^{\beta}(\x)} \big| 
		\Delta \varphi_n - S\varphi_n + D\mathcal{F}(\x, U_{n-1}, \nabla U_{n-1})(\varphi_n)
	\big|^2d\x 
	\\
	\geq
	\frac{1}{2}\int_{\Omega} e^{\lambda b^{-\beta} r^{\beta}(\x)} \Big| 
		\Delta \varphi_n\Big|^2d\x
		 -
		\int_{\Omega} e^{\lambda b^{-\beta} r^{\beta}(\x)}\Big| - S\varphi_n + D\mathcal{F}(\x, U_{n-1}, \nabla U_{n-1})(\varphi_n)
	\Big|^2d\x. 
	\label{5.8}
\end{multline}
On the other hand, since $\|\mathcal F\|_{C^2} < \infty$, we can find a constant $C$ such that
\begin{multline}
    \Big|
		\mathcal F(\x, U^*, \nabla U^*) - \mathcal{F}(\x, U_{n - 1}, \nabla U_{n - 1})  + D\mathcal F(\x, U_{n - 1}, \nabla U_{n - 1})(\varphi_{n - 1}) 
	\Big|
	\\
	\leq C\big(|\varphi_{n - 1}| + |\nabla \varphi_{n - 1}|\big),
	\label{5.9}
\end{multline}
and 
\begin{equation}
    \Big| - S\varphi_n + D\mathcal{F}(\x, U_{n-1}, \nabla U_{n-1})(\varphi_n)
	\Big|
	\leq C\big(|\varphi_{n}| + |\nabla \varphi_{n}|\big)
	\label{5.10}
\end{equation}
for all $\x \in \overline \Omega.$
Combining \eqref{5.7}, \eqref{5.8}, \eqref{5.9}, \eqref{5.10} and the inequality $|ab| \leq \frac{1}{2}a^2 + \frac{1}{2}b^2$ gives
 \begin{multline*}
\int_{\Omega} e^{2\lambda b^{-\beta} r^{\beta}(\x)} \Big| 
		\Delta \varphi_n\Big|^2d\x + \epsilon \|\varphi_n\|_{H^2(\Omega)^N}^2
		\leq C\int_{\Omega} e^{\lambda b^{-\beta} r^{\beta}(\x)}  (|\varphi_{n }|^2 + |\nabla \varphi_{n }|^2 + |\varphi_{n - 1}|^2 
		\\+ |\nabla \varphi_{n - 1}|^2) d\x 
		+C \int_{\Omega} e^{2\lambda b^{-\beta} r^{\beta}(\x)} \big|
	    \Delta \mathcal{E} - S\mathcal{E}
	\big|^2d\x
		+ C\epsilon\|U\|_{H^2(\Omega)^2}^2
		+ C\epsilon\|\mathcal{E}\|_{H^2(\Omega)^2}^2
		+C\epsilon \|\varphi_n\|_{H^2(\Omega)^N}^2,
\end{multline*}
which implies
 \begin{multline}
\int_{\Omega} e^{\lambda b^{-\beta} r^{\beta}(\x)} \Big| 
		\Delta \varphi_n\Big|^2d\x 
		\leq C\int_{\Omega} e^{\lambda b^{-\beta} r^{\beta}(\x)}  (|\varphi_{n }|^2 + |\nabla \varphi_{n }|^2 + |\varphi_{n - 1}|^2 + |\nabla \varphi_{n - 1}|^2) d\x 
		\\
		+C \int_{\Omega} e^{2\lambda b^{-\beta} r^{\beta}(\x)} \big|
	    \Delta \mathcal{E} - S\mathcal{E}
	\big|^2d\x
		+ C\epsilon\|\mathcal E\|_{H^2(\Omega)^2}^2
		+ C\epsilon\|U^*\|_{H^2(\Omega)^2}^2.
		\label{5.11}
\end{multline}

We now apply the Carleman estimate in Corollary \ref{carleman estimate}. 
Using \eqref{33} for each component of the vector $\varphi_n$, we have
\begin{multline}
	\int_{\Omega} e^{2\lambda b^{-\beta} r^{\beta}(\x)}|\Delta \varphi_n|^2 d\x
	\geq 
	C\lambda^{-1}\int_{\Omega}e^{2\lambda b^{-\beta} r^\beta(\x)} |D^2 \varphi_n|^2 d\x
	\\
	+ 	C \lambda^3 \int_{\Omega}  e^{2\lambda b^{-\beta} r^{\beta}}|\varphi_n|^2 d\x
	+ C \lambda \int_{\Omega} e^{2\lambda b^{-\beta} r^{\beta}(\x)} |\nabla \varphi_n|^2 d\x	
	\label{5.12}
\end{multline}
	Using \eqref{5.11} and \eqref{5.12}, we have
\begin{multline}
	\lambda^{-1}\int_{\Omega}e^{2\lambda b^{-\beta} r^\beta(\x)} |D^2 \varphi_n|^2 d\x
	+ 	\lambda^3 \int_{\Omega}  e^{2\lambda b^{-\beta} r^{\beta}}|\varphi_n|^2 d\x
	+ \lambda \int_{\Omega} e^{2\lambda b^{-\beta} r^{\beta}(\x)} |\nabla \varphi_n|^2 d\x
	\\
	\leq 
	C\int_{\Omega} e^{\lambda b^{-\beta} r^{\beta}(\x)}  (|\varphi_{n }|^2 + |\nabla \varphi_{n }|^2 + |\varphi_{n - 1}|^2 + |\nabla \varphi_{n - 1}|^2) d\x 
		\\
		 +C \int_{\Omega} e^{2\lambda b^{-\beta} r^{\beta}(\x)} \big|
	    \Delta \mathcal{E} - S\mathcal{E}
	\big|^2d\x
		+ C\epsilon\|\mathcal E\|_{H^2(\Omega)^2}^2
		+ C\epsilon\|U^*\|_{H^2(\Omega)^2}^2.
	\label{5.13}
\end{multline}
Since $\lambda$ is large, we can write \eqref{5.13} as
\begin{multline}
	\lambda^{-1}\int_{\Omega}e^{2\lambda b^{-\beta} r^\beta(\x)} |D^2 \varphi_n|^2 d\x
	+ 	\lambda^3 \int_{\Omega}  e^{2\lambda b^{-\beta} r^{\beta}}|\varphi_n|^2 d\x
	+ \lambda \int_{\Omega} e^{2\lambda b^{-\beta} r^{\beta}(\x)} |\nabla \varphi_n|^2 d\x
	\\
	\leq 
	C\int_{\Omega} e^{\lambda b^{-\beta} r^{\beta}(\x)}  (|\varphi_{n - 1}|^2 + |\nabla \varphi_{n - 1}|^2) d\x 
		+C \int_{\Omega} e^{2\lambda b^{-\beta} r^{\beta}(\x)} \big|
	    \Delta \mathcal{E} - S\mathcal{E}
	\big|^2d\x
	\\
		+ C\epsilon\|\mathcal E\|_{H^2(\Omega)^2}^2
		+ C\epsilon\|U^*\|_{H^2(\Omega)^2}^2.
	\label{5.14}
\end{multline}
It follows from \eqref{5.14} that
\begin{align}
    \int_{\Omega} e^{2\lambda b^{-\beta} r^\beta(\x)} &\big(\lambda^{-2} |D^2 \varphi_n|^2 
	+ 	 |\varphi_n|^2 
	+  |\nabla \varphi_n|^2\big) d\x
	\nonumber
	\\
	&\leq 
	\frac{C}{\lambda}\int_{\Omega} e^{\lambda b^{-\beta} r^{\beta}(\x)}  \big(  |\varphi_{n - 1}|^2 + |\nabla \varphi_{n - 1}|^2\big) d\x 
		+ \frac{1}{2\lambda}\epsilon\|U\|_{H^2(\Omega)^2}^2
		\nonumber
		\\
	&\leq \frac{C}{\lambda}\int_{\Omega} e^{\lambda b^{-\beta} r^{\beta}(\x)}  \big(  \lambda^{-2}|D^2 \varphi_{n-1}|^2 + |\varphi_{n - 1}|^2 + |\nabla \varphi_{n - 1}|^2\big) d\x \nonumber
	\\
	&\hspace{2cm}
		+ \frac{C}{\lambda}\Big[
		\int_{\Omega} e^{2\lambda b^{-\beta} r^{\beta}(\x)} \big|
	    \Delta \mathcal{E} - S\mathcal{E}
	\big|^2d\x +
		\epsilon\|\mathcal E\|_{H^2(\Omega)^2}^2
		+\epsilon\|U^*\|_{H^2(\Omega)^2}^2
		\Big].\label{5.15}
\end{align}
It follows from \eqref{5.15} and by induction, we have
\begin{multline}
     \int_{\Omega} e^{2\lambda b^{-\beta} r^\beta(\x)} \big(\lambda^{-2} |D^2 \varphi_n|^2 
	+ 	 |\varphi_n|^2 
	+  |\nabla \varphi_n|^2\big) d\x
	\\
	\leq
	\Big(\frac{C}{\lambda}\Big)^{n+1}\int_{\Omega} e^{\lambda b^{-\beta} r^{\beta}(\x)}  \big(  \lambda^{-2}|D^2 \varphi_{0}|^2 + |\varphi_{0}|^2 + |\nabla \varphi_{0}|^2\big) d\x 
	\\
		+ \frac{C/\lambda}{1 - C/\lambda}
		\Big[
		\int_{\Omega} e^{2\lambda b^{-\beta} r^{\beta}(\x)} \big|
	    \Delta \mathcal{E} - S\mathcal{E}
	\big|^2d\x +
		\epsilon\|\mathcal E\|_{H^2(\Omega)^2}^2
		+\epsilon\|U^*\|_{H^2(\Omega)^2}^2
		\Big]
		\label{5.16}
\end{multline}
Recall $\varphi_i = U_i - U^* - \mathcal E$, $i \geq 0$. Using \eqref{5.16} and inequalities $(a - b)^2 \geq \frac{1}{2}a^2 - b^2$ and $(a + b)^2 \leq 2a^2 + 2b^2$, we obtain \eqref{est_conv}.
\end{proof}

Theorem \ref{thm2} and Corollary \ref{col52} suggest Algorithm \ref{alg2} to solve the inverse source problem under consideration.

\begin{algorithm}[ht]
\caption{\label{alg2}The procedure to compute the numerical solution to Problem \ref{isp}}
	\begin{algorithmic}[1]
	\STATE\label{step1_alg2} Choose a basis $\{\Psi_n\}_{n \geq 1}$ of $L^2(0, T)$. Choose a cut-off number $N > 0$. 
	See Section \ref{sec6.1} and Figure \ref{fig_chooseN} for a reasonable choice of $N$.
	\STATE Compute the vector valued functions $G_0$ and $G_1$ as in \eqref{3.11} and \eqref{3.12}.
	\STATE \label{step3_alg2} Apply Algorithm \ref{alg} to compute a numerical solution $U^{\rm comp}$ to \eqref{3.13}.
	\STATE  \label{step4_alg2}
	Reconstruct the source function by the first formula in \eqref{pcomp}.
\end{algorithmic}
\end{algorithm}


\begin{Remark}[The case when $\|\mathcal F\|_{C^2} = \infty$]
	The condition $\|\mathcal F\|_{C^2} < \infty$ in Theorem \ref{thm2} is too strong. It can be relaxed in the context when the forward problem has a unique and bounded solution as in Assumption \ref{Assump}.
	Assume that $p^* \in \mathcal P$, see Definition \ref{defP} for the definition of $\mathcal P$.
	Then, we can apply the truncation technique as in the proof of Theorem \ref{thm_uni} to compute $U^*.$
	Since solution of the forward problem is bounded as in Assumption  \ref{Assump}, it follows from \eqref{3.2} that the true solution $U^*$ to \eqref{quasi-system} is bounded, namely,
	\[
		|U^*| + |\nabla U^*| < B
	\] for some number $B$. 
	Define $\chi_B$ as in \eqref{chi_B} and $\mathcal{F}_B = \chi_B \mathcal F$. Clearly, the vector valued function $U^*$ satisfies
	\begin{equation}
    \left\{
    \begin{array}{ll}
         \Delta U^* - SU^* + \mathcal{F}_B(\x, U^*, \nabla U^*) = 0 & \x \in \Omega, \\
         U^*(\x) = G_0^*(\x)& \x \in \partial \Omega,\\
         \partial_{\nu} U^*(\x) = G_1^*(\x) &\x \in \partial \Omega
    \end{array}
    \right.
    \label{quasi-system_truncate}
\end{equation}
Since $\mathcal {F}_B$ has a bounded $C^2$ norm, we can apply Algorithm \ref{alg} for \eqref{quasi-system_truncate} to compute $U^*.$
\label{rem53}
\end{Remark}
\section{Numerical Simulations}\label{NS}
In this section, we will illustrate the theoretical results by some numerical examples.
For simplicity, we implement Algorithms \ref{alg} and \ref{alg2} in 2D and in the finite difference scheme. 
The set $\Omega$ is the square $(-R,R)^2$ where $R=1$. We solve the forward problem on a larger domain $\Omega_1:=(-R_1,R_1)^2$ where $R_1=6$ 
We will use the notation $\textbf{x}=(x,y)$ to represent points on $\Omega$. 
We arrange a uniform $N_\x^1 \times N_\x^1$ grid, named as $\mathcal{G}$, on $\Omega_1$ as follows
\begin{align*}
\mathcal{G}_1=\big\{ (x_i,y_j): x_i=-R_1+(i-1)h_{\textbf{x}}, y_j=-R_1+(j-1)h_{\textbf{x}}, i, j = 1, \dots, N_\x\big\}
\end{align*} where $N_\x^1 = 240$ and
$h_{\textbf{x}}=2R_1/(N_\x^1-1)$. 
We also choose $T=1.5$ and divide the interval $[0,T]$ uniformly into $N_t = 3000$ uniform subintervals. 
For the forward data generation, we solve \eqref{main_eqn} by the explicit method. 
We then collect the (noiseless) data $g_0(\textbf{x},t)=u(\textbf{x},t)$ and $g_1(\textbf{x},t)=\partial_{\nu}u(\textbf{x},t)$ on the lateral boundary $\partial \Omega_T$. Noise is then added to the data using the following expression:
{\begin{equation}
    g_i^{\delta}(x,t)=g_i(x,t)[1+\delta (-1+2\eta(x,t))], i=0,1
    \label{add_noise}
\end{equation}} where $\delta$ denotes the noise level and $\eta(x,t)$ generates uniformly distributed numbers in the interval $[-1,1]$. 
The implementation of Algorithm \ref{alg2}  to reconstruct the source function from the noisy data is given in the next subsection. We would also like to remark here that the inversion algorithm has been implemented on the uniform grid $\mathcal{G} = \mathcal{G}_1 \cap \overline{\Omega}.$


\subsection{Implementation}\label{sec6.1}
We  now describe the steps to implement Algorithm \ref{alg2} which enable us to reconstruct the source function in this inverse source problem. 
\par \textit{Step 1:} 
Following the arguments laid out in Section \ref{sec_method}, (also see \cite[Section 5.2]{LeNguyen:jiip2022}), in Step \ref{step1_alg2} of Algorithm \ref{alg2}, we choose the special basis functions $\{\Psi_n\}_{n \geq 1}$ of the space $L^2(0,T)$, which was originally introduced by Klibanov in \cite{Klibanov:jiip2017}.
We recall that Klibanov's basis is obtained by applying the Gram-Schmidt orthogonalization process for  the sequence of functions $\{\Phi_n(t)\}_{n\geq 1}$ where $\Phi_n(t)=t^{n-1} e^{t-T/2}$. 
A comparative advantage of this special basis over traditional trigonometric  Fourier basis is that the first term of the trigonometric basis being a constant, vanishes on taking the derivatives. 
As such, there will be no contribution from the coefficient of the first basis function on the left hand side of equation \eqref{3.4}, i.e. the information from $u_1(x)\Psi_1^{\prime}(t)$ is lost, while this information is retained if we use Klibanov's basis. 
To choose the cut off number $N$,
we take the data $u(\x, t)$ for $\x \in \Gamma$ and $t \in (0, T)$ where
\[
    \Gamma = \{\x = (x, y = R): |x| \leq R\} \subset \partial \Omega.
\]
We then compute the function
\[
    e_N(\x, t) = |u(\x, t) - \sum_{n = 1}^N  u_n(\x)\Psi_n(t)|
\]
where $u_n$ is as in \eqref{3.2}. This step of choosing $N$ by evaluating $e_N$ for different values of $N$ is  illustrated in Figure \ref{fig_chooseN}. Here, $u$ solves equation \eqref{main_eqn} with $p$ given as in test 2 below.
\begin{figure}[h]
    \centering
    \subfloat[$N = 10$]{\includegraphics[width=0.3\textwidth]{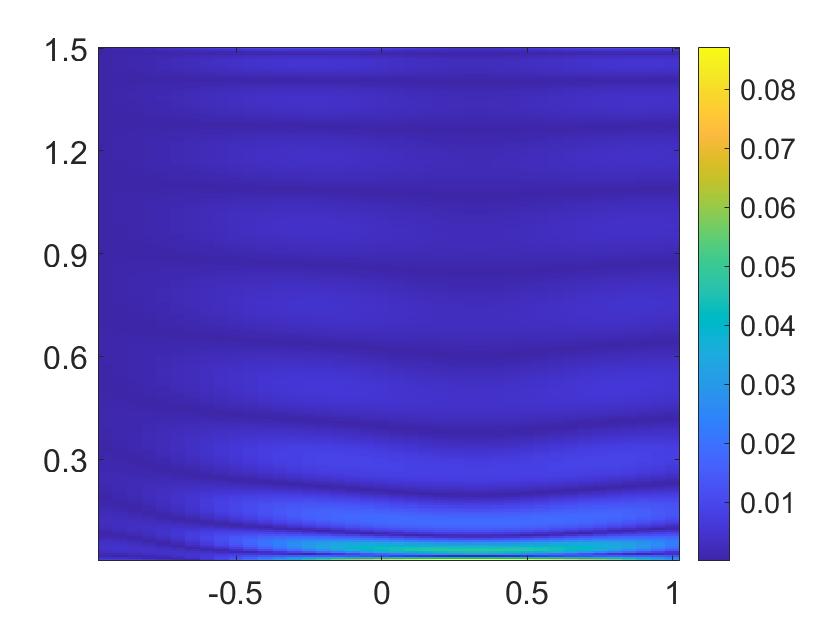}}
    \quad
    \subfloat[$N = 20$]{\includegraphics[width=0.3\textwidth]{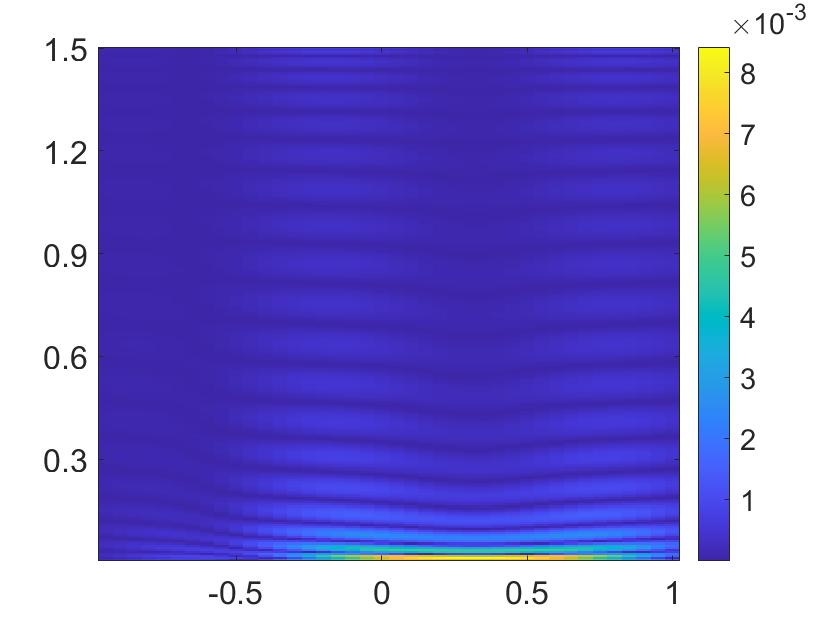}}
    \quad
    \subfloat[$N = 35$]{\includegraphics[width=0.3\textwidth]{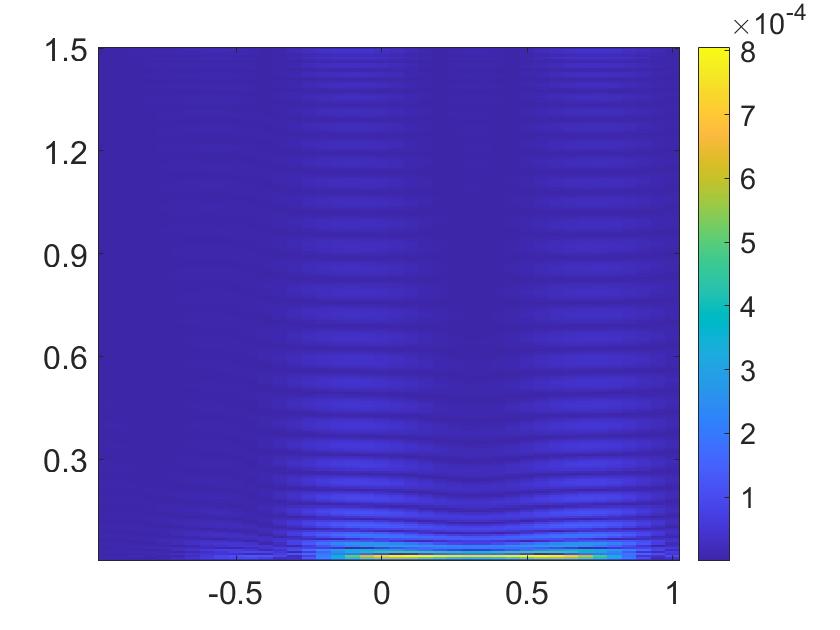}}
    \caption{\label{fig: Choose N}The graphs of the functions $e_N(\x, t)$, $(\x, t) \in \Gamma \times (0, T)$. In the figure, the x-axis runs from $-1$ to $1$ and the y-axis is from $0$ to $1.5$. It is evident that when $N = 35$, the error function $e_N$ is sufficiently small, say $\|e_N\|_{L^{\infty}} < 5\times 10^{-2}$.} 
    \label{fig_chooseN}
\end{figure}
\par \textit{Step 2:}{ Once the basis terms $\{\Psi_m\}_{m=1}^N$ are found out, one can easily evaluate the integrals given by equations \ref{3.9} and \ref{3.10}. The computation of the (vector) data term as per equations \eqref{3.11} and \eqref{3.12} is straight forward.}
\par \textit{Step 3:} The implementation of Step \ref{step3_alg2} of Algorithm \ref{alg2} or equivalently Algorithm \ref{alg} follows closely from that in \cite{LeNguyenTran:preprint2021}.
In Step \ref{step1_alg} of Algorithm \ref{alg}, we need to choose the artificial parameters.
The parameters for the Carleman weight function used in step 4 of Algorithm \ref{alg} are given by $r=\lvert \x-\x_0\rvert$ where $\x_0=(0,1.5)$, $b=5, \lambda=40$ and $\beta=10$. The regularization parameter used was $\epsilon=10^{-7}$. 
These values are chosen by a manual trial and error process. We use test 1 below as the reference test. We modify $\epsilon,$  $\lambda$, $b$, $\beta,$  $\x_0$ until we obtain a satisfactory solution with noiseless data. 
These parameters are used for all other tests with all noise levels.
In Step \ref{step2_alg} of Algorithm \ref{alg}, we compute
 the initial term of the sequence $\{U_n\}_{n \geq 0},$ i.e., $U_0$. 
 This function is found by solving the problem  \eqref{3.13} wherein we use only the linear {part of the term $\mathcal{F}(x,U,\nabla U)$ denoted by $\mathcal{F}_{lin}(U)$. The (vector) term $\mathcal{F}_{lin}(U)$ is computed from \eqref{3.7} by evaluating for each $m$ in the following way: Suppose the term $F(x,u,\nabla u)$ appearing in the integral in \eqref{3.7} is given by $F(x,u,\nabla u)= F_{lin}(u)+\text{non-linear terms}$. For example, for the first test given in the next subsection, $F(x,u,\nabla u)=u(1-u)=\underbrace{u}_{\text{linear part}}-\underbrace{u^2}_{\text{non-linear part}}$. Similarly, for the second test, $F(x,u,\nabla u)=\underbrace{u}_{linear part}+\underbrace{\sqrt{(\lvert\nabla u\rvert^2+1)}}_{non-linear part}$. Thus for both the tests, we take $F_{lin}(u)=u$ which is then used in \eqref{3.7} in place of $F(x,u,\nabla u) $ to get $\mathcal{F}_{lin}(U)$. Now to generate the initial guess we solve the following system:
 \begin{equation}
    \left\{
    \begin{array}{ll}
         \Delta U - SU + \mathcal{F}_{lin}(U) = 0 & \x \in \Omega, \\
         U(\x) = G_0(\x)& \x \in \partial \Omega,\\
         \partial_{\nu} U(\x) = G_1(\x) &\x \in \partial \Omega.
    \end{array}
    \right.
    \label{mod_eq}
\end{equation}}
{Next, the minimizer of the corresponding (strictly) convex linear functional $J_n(\phi)$ appearing in Step 4 of algorithm 1 can be found by solving the corresponding Normal equation. For brevity, we do not describe the finite difference implementation of various operators appearing in the expression for the Normal equation corresponding to  $J_n(\phi)$ as these are similar to those discussed elsewhere, see e.g. \cite [section 5.3]{LeNguyen:jiip2022}. 
We briefly mention that the matrix forms of all such operators is necessarily stored as sparse matrices. Solution to this Normal equation is then found by using the MATLAB function `lsqlin' which in turn is the
 minimizer of the functional $J_n(\phi)$ on $H_0$. }
We let the iterative Algorithm 1 run for six iterations, i.e. $n=6$ as it was observed that $\|U_{6} - U_{5}\|_{L^{\infty}}$ was small enough in all numerical experiments. 
\par \textit{Step 4: }From the computed solution $U^{comp}=U_6$, one can reconstruct the source function as described in Step \ref{step4_alg2} of Algorithm \ref{alg2}.

\subsection{Numerical examples}


\textbf{Test 1.} The true source function is given by:
\[
    p_{true}= 
\begin{cases}
    8,& \text{if } x^2+(y-0.3^2)\leq 0.45^2\\
    0,              & \text{otherwise}
\end{cases}
\]
The nonlinearity considered in this case is given by: 
\begin{align*}
    F(x,u,\nabla u)=u(1-u).
\end{align*}
\begin{figure}[ht]
	\centering
\subfloat[The true source function]{\includegraphics[width=0.3\textwidth]{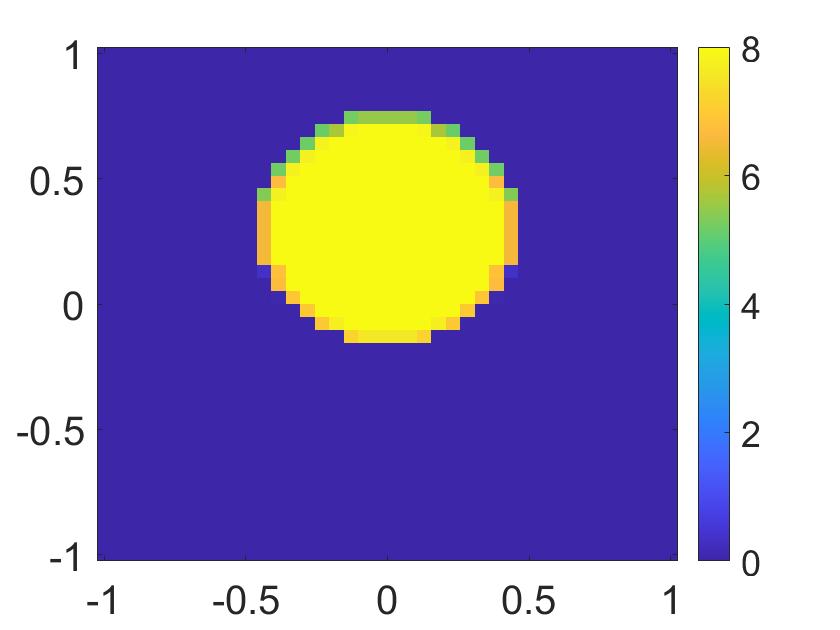}}
    \quad
    \subfloat[The computed source function]{\includegraphics[width=0.3\textwidth]{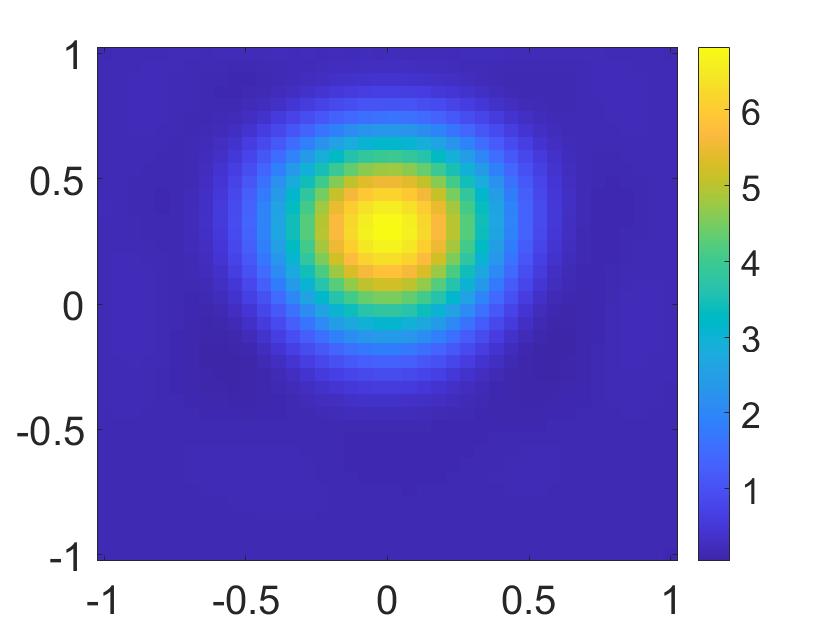}}
    \quad
    \subfloat[The difference of $U_n$ and $U_{n - 1}$, $n = \overline{1, 6}.$ ]{\includegraphics[width=0.3\textwidth]{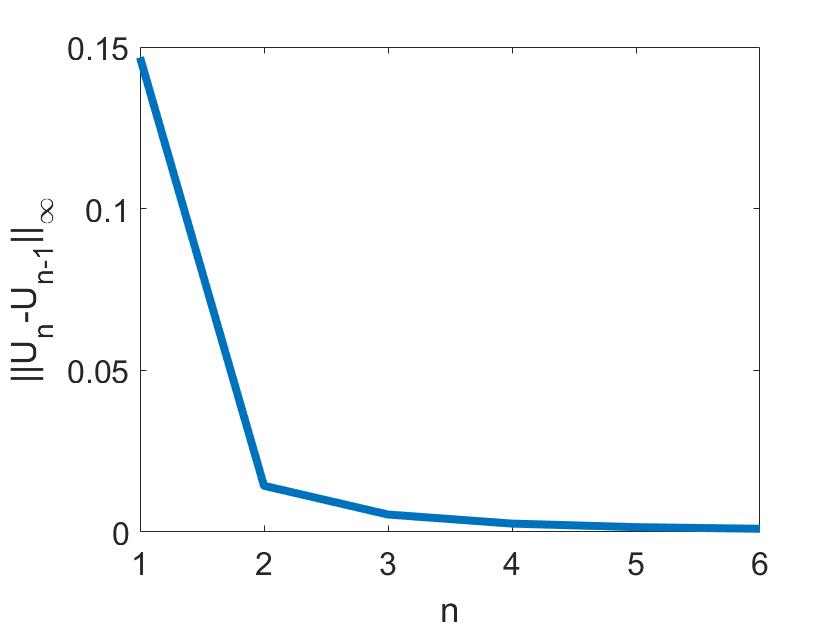}}
 \caption{Test 1. (a) contains the true phantom $p_{true}$. (b) is the reconstruction of the phantom from noisy data after 6 iterations. (c) shows the convergence of the method by plotting $||U_n-U_{n-1}||_{\infty}$ against the number of iterations $n$. Noise level used in this experiment was 20\%.}
 \label{mod1}
 \end{figure}
 
 \noindent We notice that the position of the inclusion is correctly identified. Furthermore, the maximal computed value in the reconstruction at $20\%$ noise level is $6.89$ for a relative error of $13.87\%$ in the reconstruction.\\
 
 
 \textbf{Test 2.} The true source function is given by:
\[
    p_{true}= 
\begin{cases}
    12,& \text{if } (x-0.5)^2+(y-0.5)^2\leq 0.35^2\\
    10,& \text{if } (x+0.5)^2+(y+0.5)^2\leq 0.35^2\\
    14,& \text{if } (x-0.5)^2+(y+0.5)^2\leq 0.35^2\\
    9,& \text{if } (x-0.5)^2+(y-0.5)^2\leq 0.35^2\\
    0,              & \text{otherwise}
\end{cases}
\]
The nonlinearity considered in this case is given by: 
\begin{align*}
    F(x,u,\nabla u)=u+\sqrt{(\lvert\nabla u\rvert^2+1)}.
\end{align*}
\begin{figure}[ht]
	\centering
	\subfloat[The true source function]{\includegraphics[width=0.3\textwidth]{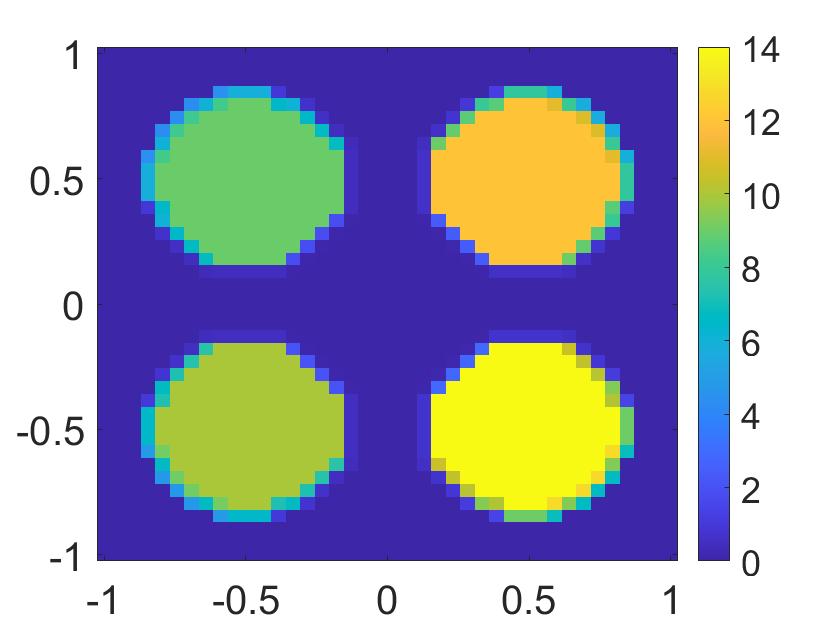}}
    \quad
    \subfloat[The computed source function]{\includegraphics[width=0.3\textwidth]{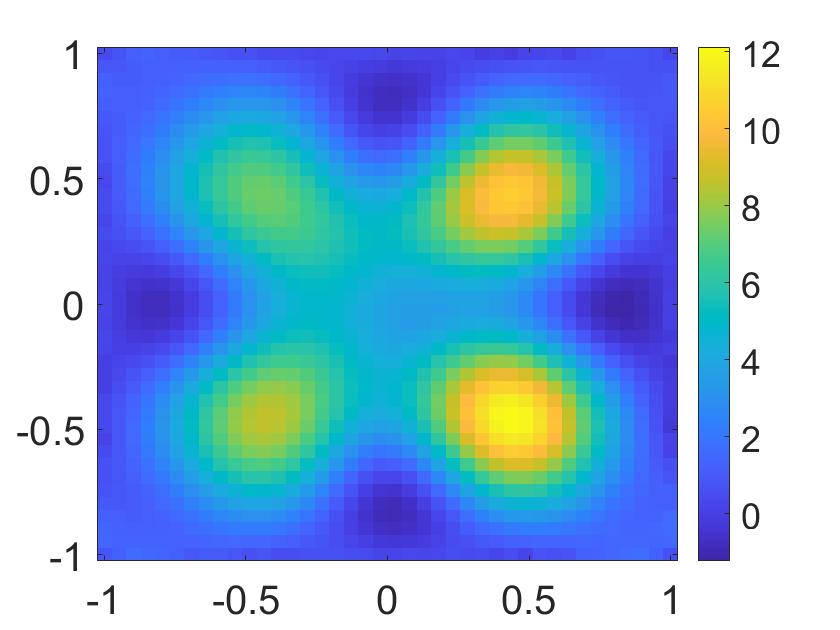}}
    \quad
    \subfloat[The difference of $U_n$ and $U_{n - 1}$, $n = \overline{1, 6}.$]{\includegraphics[width=0.3\textwidth]{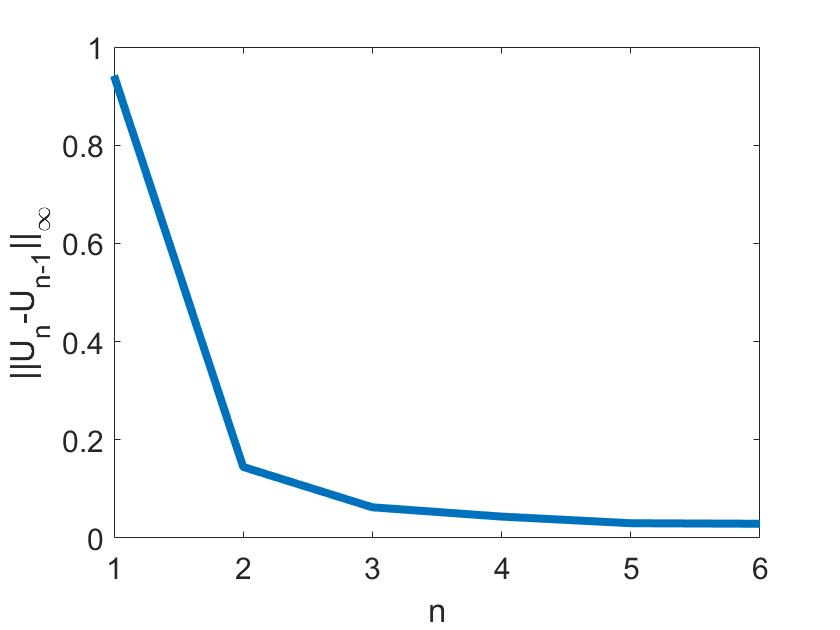}}
 \caption{Test 2. (a) contains the true phantom $p_{true}$. (b) is the reconstruction of the phantom from noisy data after 6 iterations. (c) shows the convergence of the method by plotting $||U_n-U_{n-1}||_{\infty}$ against the number of iterations $n$. Noise level used in this experiment was 20\%.}
 \label{mod2}
 \end{figure}
 Again, we notice, that the positions of all four inclusions are correctly identified. The maximal computed values in the reconstructions of the four inclusions at $20\%$ noise level are $10.66$ (up, right), $12.01$ (down, right), $7.33$ (up, left) and $8.60$ (down, left), for relative errors of $11.16\%$, $13.57\%$, $8.37\%$ and $14\%$ respectively.
\begin{Remark}
In our numerical experiments, we tried several noise levels from $1\%$ to $20\%$ with the quality of reconstruction not deteriorating by any appreciable amount at higher levels of noise. We have provided the reconstructions at $20\%$ noise level here. This shows a very high degree of the robustness of our reconstructions compared to the noise-level. This fact is also borne out by figure \ref{noise_1} in which we have compared the quality of reconstruction across the pixels lying on the vertical line passing through $x=0.5$ for the phantom in Test 2.

We recall two main facts to interpret the strong stability of our method with respect to noise, see \eqref{add_noise}. 
The first one is that we have truncated all high frequency components of the data while  the lower frequency components are not sensitive with the noise.
The second fact is that we have reduced the inverse problem to the problem of solving elliptic equations given boundary data, which is known to be stable.  The stability is guaranteed by Theorem \ref{thm2}, Corollary \ref{col5.1} and the estimate \eqref{5,6}.
\end{Remark}

\begin{figure}[ht]
	\centering
\includegraphics[width=0.5\textwidth]{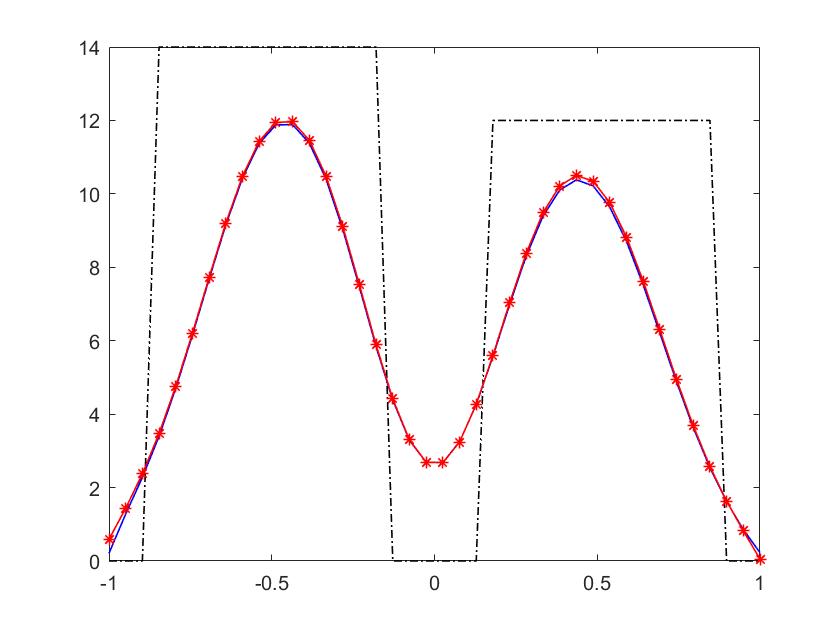}\hspace{0.01cm}

 \caption{ The true phantom (dashed, black), reconstruction at 5\% noise level (red, -*) and reconstruction at 20\% noise level (solid, blue) on the vertical line passing through $x=0.5$ in phantom for test 2. The horizontal axis in the plot refers to the y-axis of the phantom and the vertical axis gives the pixel values. Reconstruction at 20\% noise level is only very slightly worse than that at 5\% noise level.}
 \label{noise_1}
 \end{figure}

 It is remarkable that in the tests above, the value of the source inside the inclusions are high. Hence, the locally convergent approaches based on optimization, which require a good initial guess of the true solution, might not be applicable. 
 Unlike this, our methods provide reliable solutions.
 
 \section{Concluding remarks}  \label{sec7}
 
 In this paper, we introduce an iterative method to solve an inverse source problem for a nonlinear parabolic equations.
 This method can be considered as the combination of Carleman estimates and the Newton method in solving nonlinear equations.
In the first step, we truncate the Fourier series of the solution to the governing equation to
derive a system of nonlinear elliptic equations. The computed solution to this system yields directly the solution for the inverse source problem. 
In order the compute such a solution, we repeatedly solve the linearization of this system.
By using Carleman estimate, we proved that the sequence of obtained solutions to the desired solution.
 The strength of our numerical method is that it quickly provides a good approximation to the true source function. Furthermore, it does not require any knowledge of the true source function. This means a good initial guess is not necessary.

The method was implemented in finite difference. Numerical results were shown.

 \section*{Acknowledgement}
The works of TTL and LHN were partially supported  by National Science Foundation grant DMS-2208159, and  by funds provided by the Faculty Research Grant program at UNC Charlotte Fund No. 111272.

\end{document}